\documentclass[a4paper,10pt]{article}

\usepackage{amsfonts}
\usepackage{amsthm}
\usepackage{amsmath}
\usepackage{amssymb}
\usepackage{mathtools} 

\usepackage[top=1in, bottom=1in, left=1in, right=1in]{geometry}
\usepackage{xcolor}

\usepackage{graphicx}
\usepackage{epstopdf}
\usepackage{subfigure}

\usepackage{authblk}

\theoremstyle{plain}
\newtheorem{theorem}{Theorem}[section]

\newtheorem{lemma}[theorem]{Lemma}
\newtheorem{proposition}[theorem]{Proposition}

\theoremstyle{definition}
\newtheorem{definition}[theorem]{Definition}

\theoremstyle{remark}
\newtheorem{remark}[theorem]{Remark}

\DeclareSymbolFont{pxfontssymbolsC}{U}{pxsyc}{m}{n}
\DeclareMathSymbol{\coloneqq}{\mathrel}{pxfontssymbolsC}{66}

\usepackage{xcolor}
\usepackage{soul}
\definecolor{afcol}{rgb}{1,0,0}

\providecommand{\Real}{\mathop{\rm Re}\nolimits}%
\providecommand{\Imag}{\mathop{\rm Im}\nolimits}%

\providecommand{\keywords}[1]{\textbf{\textit{Keywords:}} #1}

\begin{document}


\title{A complex analysis approach to Atangana--Baleanu fractional calculus}

\date{}

\author{Arran Fernandez\thanks{Email: \texttt{arran.fernandez@emu.edu.tr}}}

\affil{{\small Department of Mathematics, Faculty of Arts and Sciences, Eastern Mediterranean University, Gazimagusa, TRNC, Mersin 10, Turkey}}

\maketitle

\begin{abstract}
The standard definition for the Atangana--Baleanu fractional derivative involves an integral transform with a Mittag-Leffler function in the kernel. We show that this integral can be rewritten as a complex contour integral which can be used to provide an analytic continuation of the definition to complex orders of differentiation. We discuss the implications and consequences of this extension, including a more natural formula for the Atangana--Baleanu fractional integral and for iterated Atangana--Baleanu fractional differintegrals.
\end{abstract}

\keywords{fractional calculus; complex analysis; analytic continuation; Mittag-Leffler functions}

\section{Introduction} \label{Sec:intro}
Fractional calculus is the study of extending the concept of $n$th derivatives and $n$th integrals, for natural numbers $n$, to a concept of $\nu$th derivatives and $\nu$th integrals -- often considered together and called \textbf{differintegrals} -- for more general types of number $\nu$. This field of study has a long history \cite{dugowson,miller-ross,samko-kilbas-marichev} and has discovered many applications to real-world problems in fields including viscoelasticity, chaos theory, diffusion problems, dynamical systems, bioengineering, etc. \cite{bagley,herrmann,hilfer,hristov,mainardi,magin,tarasov,west}

In many applications, it is enough to consider the order of differintegration $\nu$ as a real number, or even a number in some finite interval such as $[0,1]$. These conventions are used for the most part in various sources including the standard textbooks \cite{miller-ross,oldham-spanier}. However, it is also interesting and important to consider fractional extensions of complex analysis. Most of the standard definitions of fractional $\nu$th differintegrals apply equally well for $\nu\in\mathbb{C}$ as for $\nu\in\mathbb{R}$. For example, the most commonly used definition, the \textbf{Riemann--Liouville} or \textbf{RL} one, is as follows:
\begin{alignat}{2}
\label{RLdef:int} \prescript{RL}{c}I^{\nu}_zf(z)&=\frac{1}{\Gamma(\nu)}\int_c^z(z-w)^{\nu-1}f(w)\,\mathrm{d}w,&&\quad\quad\Re(\nu)>0; \\
\label{RLdef:der} \prescript{RL}{c}D^{\nu}_zf(z)&=\frac{\mathrm{d^n}}{\mathrm{d}z^n}\left(\prescript{RL}{c}I^{n-\nu}_zf(z)\right),\quad n\coloneqq\lfloor\Re(\nu)\rfloor+1,&&\quad\quad\Re(\nu)\geq0.
\end{alignat}
Here \eqref{RLdef:int} is the definition for fractional integrals and \eqref{RLdef:der} is the definition for fractional derivatives. By putting the two definitions together, we can allow $\nu$ to vary across the entire complex plane, writing \[\prescript{RL}{c}D^{-\nu}_zf(z)=\prescript{RL}{c}I^{\nu}_zf(z)\] for all $\nu\in\mathbb{C}$. Note that this gives a fractional differintegral $\prescript{RL}{c}D^{\nu}_zf(z)$ which is analytic as a function of $\nu$ for all $\nu\in\mathbb{C}$ \cite{samko-kilbas-marichev}.

A key problem in complex fractional calculus is the issue of branch points and branch cuts. The singular function $(z-w)^{\nu-1}$ which appears in the integrand of \eqref{RLdef:int} has a branch point at $w=z$, namely the point at one end of the contour of integration. It is necessary then to consider choices of branch for this function, in such a way as to yield a reasonable form for the expression \eqref{RLdef:int}. This issue is discussed e.g. in \cite[\S22]{samko-kilbas-marichev}.

A good way of resolving the branch cut issue is to redefine the Riemann--Liouville fractional differintegral in a way more suited to complex analysis. Namely, the following formula, which was first proposed in \cite{nekrassov} and further examined in \cite{osler,oldham-spanier,samko-kilbas-marichev}:
\begin{equation}
\label{RLdef:Cauchy}
\prescript{\mathbb{C}}{c}D^{\nu}_zf(z)=\frac{\Gamma(\nu+1)}{2\pi i}\int_H(w-
z)^{-\nu-1}f(w)\,\mathrm{d}w,\quad\quad\nu\in\mathbb{C}\backslash\mathbb{Z}^-,
\end{equation}
where $H$ denotes the finite Hankel contour which starts at the point $c$, wraps around $z$ in a counterclockwise sense, and returns to $c$. Here the infinite branch cut from $z$ is assumed to pass through $c$, so that the Hankel contour starts `below' the branch cut and finishes `above' it without ever crossing it. Formally, the Hankel contour can be defined as the union of the following three subcontours:
\begin{align}
\label{Hankel1} H_1&=\{z+re^{-i\pi}(z-c):1>r>\epsilon\}, \\
\label{Hankel2} H_2&=\{z+\epsilon e^{i\theta}(z-c):-\pi<\theta<\pi\}, \\
\label{Hankel3} H_3&=\{z+re^{i\pi}(z-c):\epsilon<r<1\},
\end{align}
where $\epsilon$ is a small positive constant (permitted to tend to zero by Cauchy's theorem) and $H_1$, $H_2$, $H_3$ are contiguous at the points $z+\epsilon(c-z)$. Here the notation of $e^{i\pi}$ and $e^{-i\pi}$ is used to indicate which side of the branch cut the contour is passing along.

This formula for fractional differintegration, which we shall call the \textbf{Cauchy} definition of fractional calculus since it is a natural extension of the Cauchy integral formula in complex analysis, can be compared with the Riemann--Liouville definition as follows:
\begin{itemize}
\item The Riemann--Liouville differintegral has a split-domain definition, requiring both \eqref{RLdef:int} and \eqref{RLdef:der} in order to be fully defined. Here the Cauchy differintegral has an advantage, because the same formula \eqref{RLdef:Cauchy} is valid and meromorphic on the whole complex plane.
\item The Riemann--Liouville differintegral can be applied to any $L^1$ function $f$, with differentiability conditions on $f$ in the case of fractional derivatives. Here the Cauchy differintegral has a disadvantage, because it is only useful when $f$ is analytic at least on a neighbourhood of $c$.
\item The Riemann--Liouville and Cauchy differintegrals are equivalent to each other:
\[\prescript{\mathbb{C}}{c}D^{\nu}_zf(z)=\prescript{RL}{c}D^{\nu}_zf(z)\]
for $f$ an analytic function \cite{nekrassov,oldham-spanier}. The proof for $\Re(\nu)<0$ involves collapsing the Hankel contour of integration in \eqref{RLdef:Cauchy} onto the branch cut, dividing the integral into three parts, and using the reflection formula for gamma functions to recover \eqref{RLdef:int}. Then the proof for $\Re(\nu)\geq0$ follows by analytic continuation in $\nu$.
\end{itemize}

All of the above is well-known from the standard textbooks on fractional calculus. However, the Riemann--Liouville formulae are not the only way to define fractional differintegrals. There are many different `models' of fractional calculus given by different differintegral formulae; some of them can be proven equivalent to each other, but others are completely separate. Some of the newer models of fractional calculus are justified by means of their real-life applications, while others are purely mathematical extensions of the idea of fractional differintegration. See for example \cite{fernandez-ozarslan-baleanu,garra-gorenflo-polito-tomovski,hristov2,sousa-oliveira,tateishi} for some discussion and examples of these newer models and their interrelationships.

The aim of the current work is to extend the idea of Cauchy fractional calculus, which is an equivalent formulation of the Riemann--Liouville model, and use similar ideas to propose equivalent complex-analytic formulations of other models of fractional calculus. This is important because there are many other useful and applicable models besides Riemann--Liouville, and a complex-analytic approach is sometimes invaluable for applications. For example, complex analysis has given rise to special methods for partial differential equations, such as the d-bar method which is still an active topic of research today \cite{ablowitz-fokas,fokas-pinotsis,fokas-vanderweele}.

In particular, we shall focus on the \textbf{Atangana--Baleanu} model, which we define in Section \ref{Sec:AB} below using Mittag-Leffler functions. Note that the Mittag-Leffler function has a strong and well-established connection with fractional calculus \cite{haubold-mathai-saxena,mathai-haubold}, and the Atangana--Baleanu model in particular has been proven useful by its many applications \cite{abro-khan-tassaddiq,bahaa1,bahaa2,bahaa3,bas-ozarslan,hristov}. However, so far the Atangana--Baleanu model has been defined only for real orders of differintegration, so a complex formulation is something lacking in the field. Thus, all existing literature on this type of fractional calculus has been restricted to the real domain only, unable to take advantage of the wealth of methods that can be found in complex analysis. This is a consequence of the definitions used so far which have not considered the complex domain as a possibility. The current paper brings a fresh complex-analytic perspective to the study of the Atangana--Baleanu fractional calculus, which opens the door for much new research in the future, for example using a complex analysis approach for Atangana--Baleanu fractional PDEs.

The structure of this paper is as follows. In Section \ref{Sec:AB} we provide a short introduction to the theory of the Atangana--Baleanu fractional calculus as it has been developed so far. In Section \ref{Sec:main} we derive rigorously the main definitions of the paper, checking carefully at every stage the validity of the extensions and formulae. More specifically, Section \ref{Sec:main1} is devoted to the Atangana--Baleanu derivatives (of both Riemann--Liouville and Caputo types) while Section \ref{Sec:main2} covers both the Atangana--Baleanu integrals and the iterated Atangana--Baleanu differintegrals. Section \ref{Sec:consequences} establishes some fundamental facts about the extended operators which follow naturally from the work in Section \ref{Sec:main} and the theory of analytic continuation. Section \ref{Sec:conclusions} provides the conclusions.

\section{Atangana--Baleanu fractional calculus} \label{Sec:AB}

The Atangana--Baleanu, or \textbf{AB}, definition of fractional calculus was first established in \cite{atangana-baleanu} and further investigated in works such as \cite{abdeljawad-baleanu,baleanu-fernandez,djida-atangana-area} and others. The definitions of the AB fractional integral and of the AB fractional derivatives of Riemann--Liouville type and Caputo type (ABR derivative and ABC derivative respectively) are as follows \cite{atangana-baleanu,baleanu-fernandez}.

\begin{definition}
\label{Def:AB}
Let $[a,b]$ be a real interval and $0<\nu<1$. The $\nu$th AB integral of a function $f\in L^1[a,b]$ is defined as follows, for any $x\in[a,b]$:
\begin{equation}
\label{ABint:def}
\prescript{AB}{a}I^{\nu}_xf(x)=\frac{1-\nu}{B(\nu)}f(x)+\frac{\nu}{B(\nu)}\prescript{RL}{a}I^{\nu}_xf(x).
\end{equation}
The $\nu$th ABR derivative (AB derivative of Riemann--Liouville type) of a function $f\in L^1[a,b]$ is defined as follows, for any $x\in[a,b]$:
\begin{equation}
\label{ABR:def}
\prescript{ABR}{a}D^{\nu}_xf(x)=\frac{B(\nu)}{1-\nu}\cdot\frac{\mathrm{d}}{\mathrm{d}x}\int_a^xE_{\nu}\left(\frac{-\nu}{1-\nu}(x-y)^{\nu}\right)f(y)\,\mathrm{d}y.
\end{equation}
Similarly, the $\nu$th ABC derivative (AB derivative of Caputo type) of a differentiable function $f$ on $[a,b]$ with $f'\in L^1[a,b]$ is defined as follows, for any $x\in[a,b]$:
\begin{equation}
\label{ABC:def}
\prescript{ABC}{a}D^{\nu}_xf(x)=\frac{B(\nu)}{1-\nu}\int_a^xE_{\nu}\left(\frac{-\nu}{1-\nu}(x-y)^{\nu}\right)f'(y)\,\mathrm{d}y.
\end{equation}
\end{definition}

Many properties of the AB model have been established in the literature. We present some of the most important results as follows.

\begin{proposition}[Series formula \cite{baleanu-fernandez}]
\label{Prop:ABseries}
For $0<\nu<1$ and for any $a$, $b$, $x$, $f$ as in Definition \ref{Def:AB}, we have the following locally uniformly convergent series expressions for the ABR and ABC derivatives:
\begin{align}
\label{ABR:series} \prescript{ABR}{a}D^{\nu}_xf(x)&=\frac{B(\nu)}{1-\nu}\sum_{n=0}^{\infty}\left(\frac{-\nu}{1-\nu}\right)^n\prescript{RL}{a}I^{n\nu}_xf(x); \\
\label{ABC:series} \prescript{ABC}{a}D^{\nu}_xf(x)&=\frac{B(\nu)}{1-\nu}\sum_{n=0}^{\infty}\left(\frac{-\nu}{1-\nu}\right)^n\prescript{RL}{a}I^{n\nu+1}_xf'(x).
\end{align}
\end{proposition}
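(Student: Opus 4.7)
The plan is to start from the integral definitions \eqref{ABR:def} and \eqref{ABC:def} and expand the Mittag-Leffler kernel as its defining power series
\[
E_{\nu}\!\left(\tfrac{-\nu}{1-\nu}(x-y)^{\nu}\right)=\sum_{n=0}^{\infty}\frac{1}{\Gamma(n\nu+1)}\left(\frac{-\nu}{1-\nu}\right)^{\!n}(x-y)^{n\nu},
\]
then swap sum and integral, and finally identify each resulting term as a Riemann--Liouville fractional integral via the definition \eqref{RLdef:int}. For the ABC case this already yields, after the swap,
\[
\prescript{ABC}{a}D^{\nu}_xf(x)=\frac{B(\nu)}{1-\nu}\sum_{n=0}^{\infty}\left(\frac{-\nu}{1-\nu}\right)^{\!n}\frac{1}{\Gamma(n\nu+1)}\int_a^x(x-y)^{n\nu}f'(y)\,\mathrm{d}y,
\]
and the inner integral is precisely $\prescript{RL}{a}I^{n\nu+1}_xf'(x)$, which gives \eqref{ABC:series} immediately. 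For the ABR case the same manipulation followed by an interchange of the outer $\tfrac{\mathrm{d}}{\mathrm{d}x}$ with the series produces
\[
\prescript{ABR}{a}D^{\nu}_xf(x)=\frac{B(\nu)}{1-\nu}\sum_{n=0}^{\infty}\left(\frac{-\nu}{1-\nu}\right)^{\!n}\frac{\mathrm{d}}{\mathrm{d}x}\prescript{RL}{a}I^{n\nu+1}_xf(x),
\]
and the semigroup/fundamental-theorem identity $\tfrac{\mathrm{d}}{\mathrm{d}x}\prescript{RL}{a}I^{n\nu+1}_xf(x)=\prescript{RL}{a}I^{n\nu}_xf(x)$ (with the convention $\prescript{RL}{a}I^{0}_xf(x)=f(x)$ at $n=0$, justified by the usual FTC applied to $\int_a^x f$) delivers \eqref{ABR:series}.

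The only non-routine step, and the real content of the proof, is justifying the two interchanges of limit operations. For the sum-integral swap, I would note that on any closed subinterval $[a,X]\subseteq[a,b]$ the quantity $\left|\tfrac{-\nu}{1-\nu}(x-y)^{\nu}\right|$ is bounded uniformly in $x,y\in[a,X]$, so the Mittag-Leffler series, being an entire function in its argument, converges uniformly in $y$ on $[a,x]$; combined with $f\in L^1[a,b]$ (resp.\ $f'\in L^1[a,b]$ in the ABC case) this permits termwise integration by dominated convergence (or Fubini--Tonelli applied to the absolutely convergent majorant series).

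For the derivative-sum swap in the ABR argument, I would verify that the termwise-differentiated series $\sum_{n\geq 0}\bigl(\tfrac{-\nu}{1-\nu}\bigr)^{n}\prescript{RL}{a}I^{n\nu}_xf(x)$ converges locally uniformly on $[a,b]$; this follows from the crude estimate $\bigl|\prescript{RL}{a}I^{n\nu}_xf(x)\bigr|\leq \tfrac{(b-a)^{n\nu}}{\Gamma(n\nu+1)}\|f\|_{L^1[a,b]}$ together with the asymptotic growth of $\Gamma(n\nu+1)$, which makes the series converge faster than any geometric. A standard theorem on termwise differentiation of uniformly convergent series of continuous functions then legitimises the interchange, and as a by-product one obtains the locally uniform convergence claimed in the proposition. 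I expect this uniform-convergence bookkeeping, rather than any algebraic manipulation, to be the main (but still essentially routine) obstacle.
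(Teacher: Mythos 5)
Your proof follows exactly the route of the cited source \cite{baleanu-fernandez} (the paper itself only states this proposition with a citation, and later notes that the argument ``utilises the Taylor series for the Mittag-Leffler function''): expand the kernel as its defining power series, integrate term by term, identify each term as a Riemann--Liouville integral, and for the ABR case commute the outer derivative with the sum via $\tfrac{\mathrm{d}}{\mathrm{d}x}\prescript{RL}{a}I^{n\nu+1}_x=\prescript{RL}{a}I^{n\nu}_x$. The only blemish is your estimate $\bigl|\prescript{RL}{a}I^{n\nu}_xf(x)\bigr|\leq\tfrac{(b-a)^{n\nu}}{\Gamma(n\nu+1)}\|f\|_{L^1[a,b]}$, which for merely integrable $f$ is a bound on the $L^1$-norm (via Young's inequality) rather than a pointwise bound -- a pointwise version needs $f$ bounded or $n\nu\geq1$ -- but the super-geometric decay supplied by $\Gamma(n\nu+1)$ that drives the convergence is exactly the right mechanism.
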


\begin{proposition}[Inversion relations \cite{abdeljawad-baleanu,djida-atangana-area,baleanu-fernandez}]
\label{Prop:ABcompos}
For $0<\nu<1$ and for any $a$, $b$, $x$, $f$ as in Definition \ref{Def:AB}, we have the following composition relations between the AB derivatives and integrals:
\begin{align}
\label{ABcompos1} \prescript{ABR}{a}D^{\nu}_x\Big(\prescript{AB}{a}I^{\nu}_xf(x)\Big)&=f(x); \\
\label{ABcompos2} \prescript{AB}{a}I^{\nu}_x\Big(\prescript{ABR}{a}D^{\nu}_xf(x)\Big)&=f(x); \\
\label{ABcompos3} \prescript{AB}{a}I^{\nu}_x\Big(\prescript{ABC}{a}D^{\nu}_xf(x)\Big)&=f(x)-f(a).
\end{align}
However, the semigroup property for AB differintegrals does not hold in general. For $0<\mu,\nu<1$:
\begin{align}
\label{ABsemi1} \prescript{AB}{a}I^{\mu}_x\Big(\prescript{AB}{a}I^{\nu}_xf(x)\Big)&\neq\prescript{AB}{a}I^{\mu+\nu}_xf(x); \\
\label{ABsemi2} \prescript{ABR}{a}D^{\mu}_x\Big(\prescript{ABR}{a}D^{\nu}_xf(x)\Big)&\neq\prescript{ABR}{a}D^{\mu+\nu}_xf(x); \\
\label{ABsemi3} \prescript{ABC}{a}D^{\mu}_x\Big(\prescript{ABC}{a}D^{\nu}_xf(x)\Big)&\neq\prescript{ABC}{a}D^{\mu+\nu}_xf(x).
\end{align}
\end{proposition}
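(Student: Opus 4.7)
The plan is to prove the three inversion relations \eqref{ABcompos1}--\eqref{ABcompos3} by exploiting the locally uniformly convergent series representations of Proposition \ref{Prop:ABseries}, which convert every AB derivative into a sum of Riemann--Liouville integrals obeying the semigroup law $\prescript{RL}{a}I^{\alpha}_x\prescript{RL}{a}I^{\beta}_xf=\prescript{RL}{a}I^{\alpha+\beta}_xf$. The three non-semigroup statements \eqref{ABsemi1}--\eqref{ABsemi3} will be handled by explicit counterexamples.

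I would first tackle \eqref{ABcompos2}. Substituting the series \eqref{ABR:series} for $\prescript{ABR}{a}D^{\nu}_xf$ into the definition \eqref{ABint:def} of $\prescript{AB}{a}I^{\nu}_x$, and using local uniform convergence to commute the summation with the linear operator $\prescript{AB}{a}I^{\nu}_x$, the prefactors $\tfrac{1-\nu}{B(\nu)}$ and $\tfrac{B(\nu)}{1-\nu}$ cancel exactly. The RL semigroup identity then collapses $\prescript{RL}{a}I^{\nu}_x\prescript{RL}{a}I^{n\nu}_xf$ in the second contribution to $\prescript{RL}{a}I^{(n+1)\nu}_xf$. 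After reindexing $n\mapsto m-1$, what remains is
\begin{equation*}
\sum_{n=0}^{\infty}\Big(\tfrac{-\nu}{1-\nu}\Big)^n\prescript{RL}{a}I^{n\nu}_xf(x)-\sum_{m=1}^{\infty}\Big(\tfrac{-\nu}{1-\nu}\Big)^m\prescript{RL}{a}I^{m\nu}_xf(x),
\end{equation*}
which telescopes to the $n=0$ term, namely $f(x)$. Relation \eqref{ABcompos1} follows by running the same computation in the opposite order: expand the AB integral first via \eqref{ABint:def} and apply the ABR derivative via \eqref{ABR:series} to each piece, then combine and telescope as before. Relation \eqref{ABcompos3} reduces to \eqref{ABcompos2} via the identity $\prescript{RL}{a}I^{n\nu+1}_xf'(x)=\prescript{RL}{a}I^{n\nu}_x[f(x)-f(a)]$, an immediate consequence of $\prescript{RL}{a}I^{1}_xf'=f-f(a)$ together with the RL semigroup. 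Comparing the two series then yields $\prescript{ABC}{a}D^{\nu}_xf=\prescript{ABR}{a}D^{\nu}_x[f-f(a)]$, and applying $\prescript{AB}{a}I^{\nu}_x$ to both sides delivers $f(x)-f(a)$ by \eqref{ABcompos2}.

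For the semigroup failures, a single well-chosen test function suffices in each case. Taking $f\equiv 1$ reduces every RL integral to a monomial $\frac{(x-a)^{\alpha}}{\Gamma(\alpha+1)}$; the $x$-independent part of $\prescript{AB}{a}I^{\mu}_x\prescript{AB}{a}I^{\nu}_x 1$ equals $\frac{(1-\mu)(1-\nu)}{B(\mu)B(\nu)}$ while that of $\prescript{AB}{a}I^{\mu+\nu}_x 1$ equals $\frac{1-\mu-\nu}{B(\mu+\nu)}$, and these disagree generically (e.g.\ at $\mu=\nu=\tfrac{1}{2}$ the latter vanishes while the former does not), proving \eqref{ABsemi1}. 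The same function $f\equiv 1$ serves for \eqref{ABsemi2}; for \eqref{ABsemi3} the Caputo-type derivative annihilates constants, so one instead tests with $f(x)=x-a$. In each case the two sides can be written as explicit Mittag-Leffler-type series via Proposition \ref{Prop:ABseries} and seen to differ by comparing their lowest-order coefficients.

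The main obstacle I expect is analytic bookkeeping rather than a deep obstruction: the series manipulations are routine once the interchange of infinite summation with both the RL integral and the derivative $\frac{\mathrm{d}}{\mathrm{d}x}$ inside the ABR definition has been justified by the local uniform convergence asserted in Proposition \ref{Prop:ABseries}. The most delicate point is the boundary contribution $f(a)$ in \eqref{ABcompos3}, which must be tracked carefully when passing between the ABC and ABR series.
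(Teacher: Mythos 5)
This proposition is stated in the paper as a known result, with citations to \cite{abdeljawad-baleanu,djida-atangana-area,baleanu-fernandez}; the paper itself supplies no proof, so there is no in-paper argument to compare against. Your reconstruction is correct and is essentially the argument of the cited reference \cite{baleanu-fernandez}: expanding the ABR derivative via \eqref{ABR:series}, using the RL semigroup law to turn $\prescript{RL}{a}I^{\nu}_x\prescript{RL}{a}I^{n\nu}_xf$ into $\prescript{RL}{a}I^{(n+1)\nu}_xf$, and observing that the factor $\tfrac{\nu}{1-\nu}\bigl(\tfrac{-\nu}{1-\nu}\bigr)^n=-\bigl(\tfrac{-\nu}{1-\nu}\bigr)^{n+1}$ makes the two sums telescope to the $n=0$ term $f(x)$ is exactly the right mechanism for \eqref{ABcompos1}--\eqref{ABcompos2}, and the identity $\prescript{ABC}{a}D^{\nu}_xf=\prescript{ABR}{a}D^{\nu}_x[f-f(a)]$ correctly reduces \eqref{ABcompos3} to \eqref{ABcompos2}. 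The counterexample strategy for \eqref{ABsemi1}--\eqref{ABsemi3} (comparing lowest-order coefficients for $f\equiv1$, resp.\ $f(x)=x-a$) is also sound, since with $B$ normalised the discrepancy $(1-\mu)(1-\nu)-(1-\mu-\nu)=\mu\nu\neq0$ is manifest. Two small points deserve care: your specific choice $\mu=\nu=\tfrac12$ puts $\mu+\nu=1$, which sits at the boundary of the domain $0<\nu<1$ on which the right-hand sides of \eqref{ABsemi1}--\eqref{ABsemi3} are defined (and makes $\tfrac{B(\mu+\nu)}{1-\mu-\nu}$ singular in \eqref{ABsemi2}), so it is cleaner to take generic $\mu,\nu$ with $\mu+\nu<1$; and the interchange of summation with $\prescript{RL}{a}I^{\nu}_x$ and with $\tfrac{\mathrm{d}}{\mathrm{d}x}$ is indeed licensed by the locally uniform convergence asserted in Proposition \ref{Prop:ABseries}, as you note.
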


\begin{remark}
The results of equations \eqref{ABsemi1}--\eqref{ABsemi3} are especially important in the theory of the AB model. They demonstrate that the AB differintegral operators do not obey an index law, which fact has been discussed at more length in papers such as \cite{atangana2,atangana-gomez1,atangana-gomez2}.

It is also important to be aware, contrary to some claims seen in the literature, that the AB differintegral operators are commutative. This result is stated precisely in the following Proposition.
\end{remark}

\begin{proposition}[Commutativity relations \cite{baleanu-fernandez}]
\label{Prop:ABcommut}
For $0<\mu,\nu<1$ and for any $a$, $b$, $x$, $f$ as in Definition \ref{Def:AB}, we have the following composition relations between the ABR derivatives and AB integrals:
\begin{align}
\label{ABcommut1} \prescript{AB}{a}I^{\mu}_x\Big(\prescript{AB}{a}I^{\nu}_xf(x)\Big)&=\prescript{AB}{a}I^{\nu}_x\Big(\prescript{AB}{a}I^{\mu}_xf(x)\Big); \\
\label{ABcommut2} \prescript{ABR}{a}D^{\mu}_x\Big(\prescript{ABR}{a}D^{\nu}_xf(x)\Big)&=\prescript{ABR}{a}D^{\nu}_x\Big(\prescript{ABR}{a}D^{\mu}_xf(x)\Big); \\
\label{ABcommut3} \prescript{ABR}{a}D^{\nu}_x\Big(\prescript{AB}{a}I^{\mu}_xf(x)\Big)&=\prescript{AB}{a}I^{\mu}_x\Big(\prescript{ABR}{a}D^{\nu}_xf(x)\Big).
\end{align}
\end{proposition}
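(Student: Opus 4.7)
The plan is to reduce each of the three commutativity identities to a statement about ordinary Riemann--Liouville fractional integrals, whose semigroup property $\prescript{RL}{a}I^{\mu}_x\prescript{RL}{a}I^{\nu}_x f = \prescript{RL}{a}I^{\mu+\nu}_x f$ provides the needed symmetry in $(\mu,\nu)$.

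For \eqref{ABcommut1}, I would expand both sides directly: substituting the definition \eqref{ABint:def} of $\prescript{AB}{a}I^{\nu}_x$ twice produces four terms involving $f$, $\prescript{RL}{a}I^{\nu}_x f$, $\prescript{RL}{a}I^{\mu}_x f$, and $\prescript{RL}{a}I^{\mu+\nu}_x f$. The resulting expression is manifestly symmetric under $\mu\leftrightarrow\nu$, so the identity follows with no analytic subtleties.

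For \eqref{ABcommut3}, I would use the series representation \eqref{ABR:series} from Proposition \ref{Prop:ABseries}. To treat $\prescript{AB}{a}I^{\mu}_x\bigl(\prescript{ABR}{a}D^{\nu}_x f\bigr)$ I substitute the series for $\prescript{ABR}{a}D^{\nu}_x f$ and push the operator $\prescript{AB}{a}I^{\mu}_x$ past the sum; this is justified by local uniform convergence together with the fact that, by \eqref{ABint:def}, $\prescript{AB}{a}I^{\mu}_x$ is a bounded linear combination of the identity and the continuous operator $\prescript{RL}{a}I^{\mu}_x$. The RL semigroup law then turns each $\prescript{RL}{a}I^{n\nu}_x f$ inside the series into $\frac{1-\mu}{B(\mu)}\prescript{RL}{a}I^{n\nu}_x f + \frac{\mu}{B(\mu)}\prescript{RL}{a}I^{n\nu+\mu}_x f$. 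Performing the analogous computation starting instead from the series for $\prescript{ABR}{a}D^{\nu}_x$ applied to $\prescript{AB}{a}I^{\mu}_x f$ produces the identical expression, proving \eqref{ABcommut3}.

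For \eqref{ABcommut2} the situation is more delicate. Applying \eqref{ABR:series} twice and using the RL semigroup law yields, at least formally, the double series
\[
\prescript{ABR}{a}D^{\mu}_x\prescript{ABR}{a}D^{\nu}_x f(x) = \frac{B(\mu)B(\nu)}{(1-\mu)(1-\nu)}\sum_{m=0}^{\infty}\sum_{n=0}^{\infty}\left(\frac{-\mu}{1-\mu}\right)^{\!m}\left(\frac{-\nu}{1-\nu}\right)^{\!n} \prescript{RL}{a}I^{m\mu+n\nu}_x f(x),
\]
which is manifestly symmetric in $(\mu,\nu)$. The hard part will be justifying this formal rearrangement, since Proposition \ref{Prop:ABseries} guarantees only locally uniform, not a priori absolute, convergence of the single series, and in particular the coefficients $\left|\frac{\nu}{1-\nu}\right|^n$ can exceed $1$. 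My plan is to establish absolute convergence of the double sum on any compact subinterval using the standard bound $\bigl|\prescript{RL}{a}I^{\alpha}_x f(x)\bigr| \leq \|f\|_{L^\infty[a,x]}(x-a)^{\alpha}/\Gamma(\alpha+1)$, combined with the superexponential growth of $\Gamma(m\mu+n\nu+1)$ as $m+n\to\infty$, which dominates the geometric factors in $m$ and $n$. Once absolute convergence is in hand, Fubini--Tonelli for series permits the swap of summation, and \eqref{ABcommut2} follows from the symmetry of the resulting expression.
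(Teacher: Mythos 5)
The paper does not actually prove Proposition \ref{Prop:ABcommut}: it is quoted from \cite{baleanu-fernandez}, where the argument is essentially the one you propose, namely expanding everything as series of Riemann--Liouville integrals and letting the RL semigroup property supply the symmetry in the two orders. Your treatment of \eqref{ABcommut1} (direct expansion of \eqref{ABint:def} into four terms) and of \eqref{ABcommut3} (pushing the bounded operator $\prescript{AB}{a}I^{\mu}_x$ through the series \eqref{ABR:series}) is correct. For \eqref{ABcommut2} your plan is also sound, and you have correctly identified the one point that needs care, the rearrangement of the double series: the bound $\Gamma(m\mu+n\nu+1)\geq\Gamma\bigl((m+n)\min(\mu,\nu)+1\bigr)$ does dominate the geometric factors $\left|\tfrac{\mu}{1-\mu}\right|^{m}\left|\tfrac{\nu}{1-\nu}\right|^{n}$, so the double sum converges absolutely and Fubini for series applies. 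The only repair needed is in the norm you invoke: the proposition assumes only $f\in L^1[a,b]$, so the pointwise estimate $\bigl|\prescript{RL}{a}I^{\alpha}_xf(x)\bigr|\leq\|f\|_{L^\infty[a,x]}(x-a)^{\alpha}/\Gamma(\alpha+1)$ is not available; replace it by the $L^1$ estimate $\bigl\|\prescript{RL}{a}I^{\alpha}_xf\bigr\|_{L^1[a,b]}\leq\frac{(b-a)^{\alpha}}{\Gamma(\alpha+1)}\|f\|_{L^1[a,b]}$ (Young's inequality for the convolution with the kernel $t^{\alpha-1}/\Gamma(\alpha)$), which gives the same superexponential decay and makes all the interchanges valid in the $L^1$ sense in which the identities are asserted.
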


The \textbf{iterated AB model} was defined in \cite{fernandez-baleanu2} by first iterating the AB integral operator a whole number of times and then extending the series thus obtained to an infinite series representation for a fractional iteration of the AB integral operator.

\begin{definition}
\label{Def:IAB}
Let $[a,b]$ be a real interval, $0<\nu<1$, and $\mu\in\mathbb{R}$. The iterated AB differintegral to order $(\nu,\mu)$ of a function $f\in L^1[a,b]$ is defined as follows, for any $x\in[a,b]$:
\begin{align}
\label{IAB:RLdef} \prescript{IAB}{a}I^{\nu,\mu}_{x}f(x)&=\sum_{n=0}^{\infty}\frac{\binom{\mu}{n}(1-\nu)^{\mu-n}\nu^n}{B(\nu)^{\mu}}\prescript{RL}{a}I_{x}^{n\nu}f(x) \\
\label{IAB:2def} &=\left(\frac{1-\nu}{B(\nu)}\right)^{\mu}f(x)+\sum_{n=1}^{\infty}\frac{\binom{\mu}{n}(1-\nu)^{\mu-n}\nu^n}{B(\nu)^{\mu}\Gamma(n\nu)}\int_{a}^x(x-y)^{n\nu-1}f(y)\,\mathrm{d}y \\
\label{IAB:deltadef} &=\int_{a}^x\left(\frac{1-\nu}{B(\nu)}\right)^{\mu}f(y)\left[\delta(x-y)+\sum_{n=1}^{\infty}\frac{\binom{\mu}{n}(1-\nu)^{-n}\nu^n}{\Gamma(n\nu)}(x-y)^{n\nu-1}\right]\,\mathrm{d}y,
\end{align}
where $\delta$ is the Dirac delta function.
\end{definition}

In particular, when $\mu=1$ this yields the standard AB integral, when $\mu=m\in\mathbb{N}$ it is the $m$th iteration of the standard AB integral operator, and when $\mu=-1$ it is the ABR derivative. We mention here the most important property of the iterated AB model -- the semigroup property, its main advantage over the standard AB model -- but we invite readers to check \cite{fernandez-baleanu2} for more discussion of these operators and their properties.

\begin{proposition}[Semigroup property \cite{fernandez-baleanu2}]
\label{Prop:IABsemigroup}
For $\mu,\rho\in\mathbb{R}$ and for any $a,b,x,f,\nu$ as in Definition \ref{Def:IAB}, we have the following semigroup property in the second variable for iterated AB differintegrals:
\[\prescript{IAB}{a}I^{\nu,\mu}_{x}\Big(\prescript{IAB}{a}I^{\nu,\rho}_{x}f(x)\Big)=\prescript{IAB}{a}I^{\nu,\mu+\rho}_{x}f(x).\]
\end{proposition}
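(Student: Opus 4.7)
The plan is to verify the claim by direct manipulation of the Riemann--Liouville series representation \eqref{IAB:RLdef} for the iterated AB differintegral, combining two classical ingredients: the RL semigroup property $\prescript{RL}{a}I^{\alpha}_{x}\prescript{RL}{a}I^{\beta}_{x}=\prescript{RL}{a}I^{\alpha+\beta}_{x}$ (valid for $\alpha,\beta\geq 0$) and the Chu--Vandermonde identity for generalised binomial coefficients, $\sum_{m=0}^{k}\binom{\mu}{m}\binom{\rho}{k-m}=\binom{\mu+\rho}{k}$, which holds for all $\mu,\rho\in\mathbb{R}$.

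Concretely, I would first apply \eqref{IAB:RLdef} to the inner operator $\prescript{IAB}{a}I^{\nu,\rho}_{x}f$ and then apply \eqref{IAB:RLdef} again to the outer operator, exchanging $\prescript{IAB}{a}I^{\nu,\mu}_{x}$ with the infinite sum, to obtain
\[\prescript{IAB}{a}I^{\nu,\mu}_{x}\Big(\prescript{IAB}{a}I^{\nu,\rho}_{x}f(x)\Big)=\sum_{m=0}^{\infty}\sum_{n=0}^{\infty}\frac{\binom{\mu}{m}\binom{\rho}{n}(1-\nu)^{\mu+\rho-m-n}\nu^{m+n}}{B(\nu)^{\mu+\rho}}\,\prescript{RL}{a}I^{m\nu}_{x}\prescript{RL}{a}I^{n\nu}_{x}f(x).\]
Since all RL orders appearing are non-negative, the semigroup law reduces the compositions to $\prescript{RL}{a}I^{(m+n)\nu}_{x}f(x)$. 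Reindexing by $k=m+n$ then gives
\[\sum_{k=0}^{\infty}\frac{(1-\nu)^{\mu+\rho-k}\nu^{k}}{B(\nu)^{\mu+\rho}}\left(\sum_{m=0}^{k}\binom{\mu}{m}\binom{\rho}{k-m}\right)\prescript{RL}{a}I^{k\nu}_{x}f(x),\]
and applying Chu--Vandermonde collapses the inner bracket to $\binom{\mu+\rho}{k}$. Comparison with \eqref{IAB:RLdef} in the order $\mu+\rho$ finishes the identification.

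The main obstacle is the rigorous justification of the two interchanges above, namely (a) passing $\prescript{IAB}{a}I^{\nu,\mu}_{x}$ through the infinite series defining $\prescript{IAB}{a}I^{\nu,\rho}_{x}f$, and (b) rearranging the resulting double series by diagonals. Both reduce to an absolute-convergence estimate. Using $|\binom{\mu}{m}|=O(m^{-1-\mu})$ for $\mu\notin\mathbb{N}_{0}$ (and eventual vanishing for $\mu\in\mathbb{N}_{0}$), together with the standard bound $\|\prescript{RL}{a}I^{k\nu}_{x}f\|_{L^{1}[a,b]}\leq\frac{(b-a)^{k\nu}}{\Gamma(k\nu+1)}\|f\|_{L^{1}[a,b]}$, the factorial decay in $k$ easily dominates the geometric factor $(\nu/(1-\nu))^{m+n}$, yielding absolute and locally uniform convergence of the double series. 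This is essentially the same estimate that underlies the convergence of \eqref{IAB:RLdef} itself in \cite{fernandez-baleanu2}, and it legitimises all the manipulations used in the argument.
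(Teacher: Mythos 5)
Your proof is correct. The paper states this proposition without proof, deferring to \cite{fernandez-baleanu2}, and your argument --- expanding both operators via the series representation \eqref{IAB:RLdef}, applying the Riemann--Liouville semigroup law to the non-negative orders $m\nu$ and $n\nu$, reindexing along diagonals, and collapsing the binomial convolution with the Chu--Vandermonde identity, with the interchanges justified by the factorial decay of $1/\Gamma(k\nu+1)$ against the geometric and polynomially bounded factors --- is essentially the same proof as in that reference.
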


\section{The complex analysis approach} \label{Sec:main}

\subsection{AB derivatives} \label{Sec:main1}

First, we demonstrate that the definitions \eqref{ABR:def}--\eqref{ABC:def} for the ABR and ABC derivatives can be extended to complex values of the order of differentiation $\nu$ by using analytic continuation.

\begin{lemma} \label{Lem:ABderAC}
Let $c$ and $z$ be real numbers with $c<z$, let $f$ be a function as in Definition \ref{Def:AB}, and assume the multiplier function $B(\nu)$ is analytic. Then the integral formulae 
\begin{align}
\label{ABR:Cdef} \prescript{ABR}{c}D^{\nu}_zf(z)&=\frac{B(\nu)}{1-\nu}\cdot\frac{\mathrm{d}}{\mathrm{d}z}\int_c^zE_{\nu}\left(\frac{-\nu}{1-\nu}(z-y)^{\nu}\right)f(y)\,\mathrm{d}y, \\
\label{ABC:Cdef} \prescript{ABC}{c}D^{\nu}_zf(z)&=\frac{B(\nu)}{1-\nu}\int_c^zE_{\nu}\left(\frac{-\nu}{1-\nu}(z-y)^{\nu}\right)f'(y)\,\mathrm{d}y,
\end{align}
are well-defined for all $\nu\in\mathbb{C}$ with $\Real(\nu)>0$ and $\nu\neq1$, and they yield functions of $\nu$ which are analytic on the half-plane $\Real(\nu)>0$.
\end{lemma}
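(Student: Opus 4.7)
The plan is to reduce this to a statement about the series representations of Proposition \ref{Prop:ABseries}, which were derived there only for real $0<\nu<1$ but whose $n$th terms are individually meromorphic in $\nu$ on the half-plane $\Real(\nu)>0$. I would begin by expanding the Mittag--Leffler function in the kernel by its defining series $E_\nu(w)=\sum_{n=0}^{\infty}w^n/\Gamma(n\nu+1)$, so that the integrand of \eqref{ABR:Cdef} becomes
\[
\sum_{n=0}^{\infty}\frac{1}{\Gamma(n\nu+1)}\left(\frac{-\nu}{1-\nu}\right)^n(z-y)^{n\nu}f(y),
\]
interchange summation and integration (justified \emph{a posteriori} by the uniform bounds discussed below), and recognise each resulting integral as a multiple of a Riemann--Liouville fractional integral. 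This yields $\sum_{n}\bigl(\tfrac{-\nu}{1-\nu}\bigr)^n\,\prescript{RL}{c}I^{n\nu+1}_zf(z)$; applying $\frac{\mathrm{d}}{\mathrm{d}z}$ termwise shifts each order down by one and exactly reproduces \eqref{ABR:series}. The argument for \eqref{ABC:Cdef} is identical with $f$ replaced by $f'$ and reproduces \eqref{ABC:series}.

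Well-definedness for complex $\nu$ with $\Real(\nu)>0$ and $\nu\neq1$ is then straightforward. On the integration range $y\in[c,z]$ the base $z-y$ is real and non-negative, so $(z-y)^{\nu}=\exp(\nu\log(z-y))$ is unambiguous; the series for $E_\nu$ is entire in its argument whenever $\Real(\nu)>0$, giving a continuous and bounded kernel on $[c,z]$; and the Riemann--Liouville integrals $\prescript{RL}{c}I^{n\nu+1}_zf(z)$ are well-defined whenever $\Real(n\nu+1)>0$, which holds for every $n\geq0$. Each term of the resulting series is thus an analytic function of $\nu$ on $\{\Real(\nu)>0\}\setminus\{1\}$: the coefficient $\bigl(\tfrac{-\nu}{1-\nu}\bigr)^n$ is meromorphic with only pole at $\nu=1$, the factor $B(\nu)$ is analytic by hypothesis, and it is standard (see \cite{samko-kilbas-marichev}) that $\prescript{RL}{c}I^{\alpha}_zf(z)$ is analytic in $\alpha$ on $\{\Real(\alpha)>0\}$ for $f\in L^1$.

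The real work, and what I expect to be the main technical obstacle, is establishing \emph{locally uniform} convergence in $\nu$ of the series on compact subsets $K\subset\{\Real(\nu)>0\}\setminus\{1\}$, for then Weierstrass's theorem on uniform limits of analytic functions simultaneously justifies the termwise interchange of integration, the interchange of $\frac{\mathrm{d}}{\mathrm{d}z}$ with summation, and the claimed analyticity in $\nu$. I would bound the $n$th term using the crude estimate
\[
\bigl|\prescript{RL}{c}I^{n\nu+1}_zf(z)\bigr|\leq\frac{(z-c)^{n\Real(\nu)}}{|\Gamma(n\nu+1)|}\,\|f\|_{L^1[c,z]},
\]
combined with the uniform bound $\bigl|\tfrac{\nu}{1-\nu}\bigr|\leq M_K$ for $\nu\in K$ and Stirling's asymptotic, which gives $|\Gamma(n\nu+1)|^{-1}=O\bigl((nR)^{-nR}\bigr)$ with $R=\min_{\nu\in K}\Real(\nu)>0$, uniformly in $\nu\in K$. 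The $n$th term thus decays super-geometrically, retroactively legitimising all the formal manipulations above. Finally, the derivative $\frac{\mathrm{d}}{\mathrm{d}z}$ in \eqref{ABR:Cdef} may be moved inside the series by the same uniform estimates applied to the shifted orders $\prescript{RL}{c}I^{n\nu}_zf(z)$, completing the argument.
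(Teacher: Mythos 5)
Your argument on the punctured half-plane $\{\Real(\nu)>0\}\setminus\{1\}$ is essentially the paper's own: both proofs rest on the locally uniform convergence (in $\nu$, and in the integration variable) of the Mittag--Leffler series $\sum_n\bigl(\tfrac{-\nu}{1-\nu}\bigr)^n(z-y)^{n\nu}/\Gamma(n\nu+1)$, with the super-geometric decay of $|\Gamma(n\nu+1)|^{-1}$ doing the work, followed by Weierstrass's theorem; your route through the Riemann--Liouville series of Proposition \ref{Prop:ABseries} is just a slightly more explicit packaging of the same estimate (and is in fact the content of Lemma \ref{Lem:Cseries} in the paper). Your Stirling bound is fine up to an extra geometric factor $e^{O(n)}$ coming from $\Imag(\nu)$, which the super-exponential decay absorbs.

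The genuine gap is the point $\nu=1$. The lemma asserts analyticity on the full half-plane $\Real(\nu)>0$, not on the punctured half-plane, and your proof cannot reach $\nu=1$: every compact set $K$ you work on must exclude it, since the coefficient $\bigl(\tfrac{-\nu}{1-\nu}\bigr)^n$ has a pole of order $n$ there and your bound $\bigl|\tfrac{\nu}{1-\nu}\bigr|\leq M_K$ degenerates. The series decomposition is therefore structurally incapable of deciding whether the singularity at $\nu=1$ is removable; a separate argument is required. The paper supplies it: as $\nu\rightarrow1$ the normalised kernel $\tfrac{B(\nu)}{1-\nu}E_{\nu}\bigl(\tfrac{-\nu}{1-\nu}(z-y)^{\nu}\bigr)$ tends to a multiple of the Dirac delta (as shown in the Caputo--Fabrizio and Atangana--Baleanu papers), so both right-hand sides tend to the finite limit $f'(z)$, and Riemann's removable singularity theorem then upgrades analyticity from the punctured half-plane to all of $\Real(\nu)>0$. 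You need to add this step (or some equivalent local boundedness argument near $\nu=1$) to prove the statement as written.
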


\begin{proof}
The function
\[E_{\nu}\left(\frac{-\nu}{1-\nu}(x-y)^{\nu}\right)=\sum_{n=0}^{\infty}\left(\frac{-\nu}{1-\nu}\right)^n\frac{(x-y)^{n\nu}}{\Gamma(n\nu+1)}\]
is clearly analytic as a function of $\nu$ on the domain $U\coloneqq\{\nu\in\mathbb{C}:\Real(\nu)>0,\;\nu\neq1\}$ when $c<y<x$, since the inverse gamma function is entire and the power functions are well-behaved at the singular point $y=x$ and the series is locally uniformly convergent. Thus, the integral formulae for the ABR and ABC derivatives yield analytic functions of $\nu\in U$. It remains to check the value $\nu=1$.

It was shown in \cite{caputo-fabrizio,atangana-baleanu} that as $\nu\rightarrow1$ the kernel function (being in this case an exponential function rather than a Mittag-Leffler function) approaches the Dirac delta distribution, and therefore the right-hand sides of both \eqref{ABR:Cdef} and \eqref{ABC:Cdef} become simply $f'(z)$. So the singularity at $\nu=1$ is removable, and the result follows.
\end{proof}

Lemma \ref{Lem:ABderAC} provides a natural \textbf{analytic continuation} of the ABR and ABC derivatives to general complex numbers $\nu$. We can also propose an analogue of the Cauchy formula \eqref{RLdef:Cauchy} which gives an alternative formulation for these generalised AB derivatives, and which once again is valid on the left half-plane ($\Real(\nu)<0$) as well as the right ($\Real(\nu)>0$).

\begin{lemma} \label{Lem:Cseries}
The series formulae for the ABR and ABC derivatives, given by Proposition \ref{Prop:ABseries}, are still valid for the extended definitions given by \eqref{ABR:Cdef}--\eqref{ABC:Cdef} for all $\nu$ with $\Real(\nu)>0$.
\end{lemma}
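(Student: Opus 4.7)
The natural plan is to derive both series by substituting the defining Taylor expansion of the Mittag-Leffler function into the integral formulae \eqref{ABR:Cdef}--\eqref{ABC:Cdef}, then interchanging the resulting sum with the integral (and, in the ABR case, also with $\mathrm{d}/\mathrm{d}z$), and finally recognising each term as a Riemann--Liouville fractional integral. This is the same manipulation already used in \cite{baleanu-fernandez} to establish Proposition \ref{Prop:ABseries} in the real range $0<\nu<1$, and since the justification of each interchange depends only on local uniform convergence of a Mittag-Leffler-type series, the argument should transfer to complex $\nu$ with $\Real(\nu)>0$ essentially unchanged.

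Concretely, for the ABC case I would insert
\[E_{\nu}\!\left(\tfrac{-\nu}{1-\nu}(z-y)^{\nu}\right) = \sum_{n=0}^{\infty}\left(\tfrac{-\nu}{1-\nu}\right)^n\frac{(z-y)^{n\nu}}{\Gamma(n\nu+1)}\]
into \eqref{ABC:Cdef}, swap sum and integral, and identify the $n$th term as $\prescript{RL}{c}I^{n\nu+1}_z f'(z)$, which is exactly \eqref{ABC:series}. For the ABR derivative the same expansion inside \eqref{ABR:Cdef} produces $\sum_n (\cdots)^n\prescript{RL}{c}I^{n\nu+1}_z f(z)$ still under $\mathrm{d}/\mathrm{d}z$; I would then differentiate termwise using the identity $\tfrac{\mathrm{d}}{\mathrm{d}z}\prescript{RL}{c}I^{\alpha+1}_z f = \prescript{RL}{c}I^{\alpha}_z f$ for $\Real(\alpha)>0$, which is proved by differentiating under the integral sign (the boundary term at $y=z$ vanishes because $(z-y)^{\alpha}|_{y=z}=0$), to bring the exponent from $n\nu+1$ down to $n\nu$ and recover \eqref{ABR:series}.

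The step expected to require the most care is justifying both interchanges uniformly in $\nu$. For the sum/integral swap one can majorise $\left|\left(\tfrac{-\nu}{1-\nu}\right)^n(z-y)^{n\nu}/\Gamma(n\nu+1)\right|$, on $[c,z]\times K$ for $K$ a compact subset of $\{\Real(\nu)>0,\nu\neq 1\}$, by $M^n(z-c)^{n\Real(\nu)}/|\Gamma(n\nu+1)|$ with $M=\sup_K|\nu/(1-\nu)|$, and observe that this majorant is summable since it corresponds to a value of an entire Mittag-Leffler-type function; the analogous bound for the differentiated series replaces $\Gamma(n\nu+1)$ by $\Gamma(n\nu)$ and is handled identically. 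As a cleaner alternative one could avoid the explicit computation and argue purely by analytic continuation: both sides of \eqref{ABR:series}--\eqref{ABC:series} are analytic in $\nu$ on $\{\Real(\nu)>0,\nu\neq 1\}$ (the left-hand sides by Lemma \ref{Lem:ABderAC}, the right-hand sides by Weierstrass's theorem applied to the same majorant bound), they agree on $(0,1)$ by Proposition \ref{Prop:ABseries}, and the identity theorem then yields the full result on the connected domain.
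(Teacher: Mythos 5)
Your proposal is correct and follows essentially the same route as the paper: the paper's proof of this lemma simply states that the argument is the same as in \cite{baleanu-fernandez}, namely inserting the Taylor series of the Mittag-Leffler function (valid for all $\nu$) and recognising the terms as Riemann--Liouville integrals, which is exactly the manipulation you carry out. The extra care you take in justifying the sum/integral and sum/derivative interchanges for complex $\nu$ with $\Real(\nu)>0$, and the alternative identity-theorem argument, are welcome elaborations of details the paper leaves implicit.
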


\begin{proof}
The proof is the same as in \cite{baleanu-fernandez}, utilising the Taylor series for the Mittag-Leffler function which is valid for all values of $\nu$.
\end{proof}

\begin{lemma}
\label{Lem:ABseriesCauchy}
Let $c$ and $z$ be real numbers with $c<z$, let $\nu\in\mathbb{C}$ with $\Real(\nu)>0$ and $\nu\not\in\mathbb{Q}$, and let $f$ be a complex function which is analytic on an open neighbourhood of the straight line-segment $[c,z]$. Then the ABR and ABC derivatives of $f$ can be written in the following form:
\begin{align}
\label{ABR:seriesCauchy} \prescript{ABR}{c}D^{\nu}_zf(z)&=\frac{B(\nu)}{2\pi i(1-\nu)}\cdot\frac{\mathrm{d}}{\mathrm{d}z}\sum_{n=0}^{\infty}\int_H\Gamma(-n\nu)\left(\frac{-\nu}{1-\nu}(w-z)\right)^{n\nu}f(w)\,\mathrm{d}w, \\
\label{ABC:seriesCauchy} \prescript{ABC}{c}D^{\nu}_zf(z)&=\frac{B(\nu)}{2\pi i(1-\nu)}\sum_{n=0}^{\infty}\int_H\Gamma(-n\nu)\left(\frac{-\nu}{1-\nu}(w-z)\right)^{n\nu}f'(w)\,\mathrm{d}w,
\end{align}
where the series are locally uniformly convergent and the Hankel contour $H$ is defined as above by equations \eqref{Hankel1}--\eqref{Hankel3}.
\end{lemma}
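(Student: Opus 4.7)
The plan is to start from the series representations established in Lemma \ref{Lem:Cseries}, which already express the ABR and ABC derivatives as locally uniformly convergent sums of Riemann--Liouville fractional integrals, and then convert each Riemann--Liouville integral into a Hankel contour integral by applying the Cauchy formula \eqref{RLdef:Cauchy}. Since $f$ is analytic on a neighbourhood of $[c,z]$, this substitution is legitimate, and it produces exactly the kind of Hankel contour integrals appearing in \eqref{ABR:seriesCauchy}--\eqref{ABC:seriesCauchy}.

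Concretely, for the ABR case I would start from
\[\prescript{ABR}{c}D^{\nu}_zf(z) = \frac{B(\nu)}{1-\nu}\sum_{n=0}^{\infty}\left(\frac{-\nu}{1-\nu}\right)^n \prescript{RL}{c}I^{n\nu}_z f(z),\]
rewrite each term using $\prescript{RL}{c}I^{n\nu}_z f(z) = \frac{\mathrm{d}}{\mathrm{d}z}\prescript{RL}{c}I^{n\nu+1}_z f(z)$, and pull the differentiation operator $\frac{\mathrm{d}}{\mathrm{d}z}$ outside the sum (justified by local uniform convergence on compact subsets of the $z$-variable away from $c$). I would then apply \eqref{RLdef:Cauchy} at order $-(n\nu+1)$ to each $\prescript{RL}{c}I^{n\nu+1}_z f(z)$, producing $\frac{\Gamma(-n\nu)}{2\pi i}\int_H (w-z)^{n\nu} f(w)\,\mathrm{d}w$ inside the sum, and finally absorb the factor $\left(\frac{-\nu}{1-\nu}\right)^n$ into the integrand to obtain the stated form \eqref{ABR:seriesCauchy}. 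The proof of \eqref{ABC:seriesCauchy} is identical, with $f'$ in place of $f$ and without the outer derivative.

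For the local uniform convergence claim I would argue either by direct comparison --- since Lemma \ref{Lem:Cseries} already guarantees convergence of the Riemann--Liouville series and the Cauchy formula reproduces each $\prescript{RL}{c}I^{n\nu+1}_z f(z)$ exactly, the new series must converge to the same sum --- or by direct estimation using the reflection formula $\Gamma(-n\nu) = \pi/[\sin(\pi n\nu)\Gamma(n\nu+1)]$, whose factorial decay in $n$ overwhelms the polynomial growth of $|w-z|^{n\Real(\nu)}$ along the compact Hankel contour (with $\nu\not\in\mathbb{Q}$ keeping $\sin(\pi n\nu)$ bounded away from zero).

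The main obstacle is the treatment of the $n=0$ term, where $\Gamma(-n\nu)|_{n=0}=\Gamma(0)$ is a pole. The hypothesis $\nu\not\in\mathbb{Q}$ ensures $\Gamma(-n\nu)$ is finite for every $n\geq 1$, so only this single term is problematic; but applying \eqref{RLdef:Cauchy} naively at order $-1$ is not permitted, since that value lies in the excluded set $\mathbb{Z}^{-}$. The resolution is that $\int_H f(w)\,\mathrm{d}w = 0$ by Cauchy's theorem, so the apparent $\infty\cdot 0$ must be interpreted as an analytic continuation limit. A Laurent expansion of $\Gamma(-n\nu)(w-z)^{n\nu}$ about $n\nu=0$ isolates a finite contribution of the form $-\log(w-z)$, whose Hankel integral against $f$ collapses by the standard branch-cut calculation to $2\pi i\int_c^z f(w)\,\mathrm{d}w$; after the outer $\frac{\mathrm{d}}{\mathrm{d}z}$ this reproduces the $n=0$ contribution $f(z)$ of the original series, confirming that the Hankel representation and the Riemann--Liouville representation agree term by term.
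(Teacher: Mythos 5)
Your proof is correct and follows essentially the same route as the paper: start from the series of Riemann--Liouville integrals guaranteed by Lemma \ref{Lem:Cseries}, pull the outer $\frac{\mathrm{d}}{\mathrm{d}z}$ through the sum, and convert each $\prescript{RL}{c}I^{n\nu+1}_z$ term to Hankel form via \eqref{RLdef:Cauchy}, absorbing the factor $\left(\frac{-\nu}{1-\nu}\right)^n$ into the integrand.

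Two remarks. First, your treatment of the $n=0$ term actually goes beyond the paper: the paper's justification that $\nu\notin\mathbb{Q}$ ensures $n\nu+1$ is never in $\mathbb{N}$ fails precisely at $n=0$, where the order is $1$ and \eqref{RLdef:Cauchy} is being invoked at the excluded value $-1$, leaving the term in the indeterminate form $\Gamma(0)\cdot\int_Hf(w)\,\mathrm{d}w=\infty\cdot0$. Your limiting/Laurent argument, which discards the constant part of $\Gamma(s)(w-z)^{-s}$ against $\int_Hf=0$ and recovers $\int_c^zf(w)\,\mathrm{d}w$ from the $-\log(w-z)$ term, is the right way to make that term rigorous, and it patches a real gap in the paper's own proof. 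Second, your alternative convergence argument is flawed: for real irrational $\nu$ the quantity $\sin(\pi n\nu)$ is nonzero for all $n\geq1$ but is \emph{not} bounded away from zero (indeed for Liouville-type $\nu$ it can decay extremely rapidly along a subsequence), and $|w-z|^{n\Real(\nu)}$ grows geometrically rather than polynomially; so that estimate does not go through as stated. Only your first convergence argument --- that each Hankel integral equals, term by term, a multiple of the corresponding term of the already locally uniformly convergent Riemann--Liouville series --- is safe, and it is the one the paper implicitly relies on.
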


\begin{proof}
By Lemma \ref{Lem:Cseries}, we can use the expressions for the AB derivatives as infinite series of RL integrals and then rewrite each RL integral in the Cauchy form:
\begin{align*}
\prescript{ABR}{c}D^{\nu}_zf(z)&=\frac{B(\nu)}{1-\nu}\cdot\frac{\mathrm{d}}{\mathrm{d}z}\sum_{n=0}^{\infty}\left(\frac{-\nu}{1-\nu}\right)^n\prescript{RL}{c}I^{n\nu+1}_zf(z) \\
&=\frac{B(\nu)}{1-\nu}\cdot\frac{\mathrm{d}}{\mathrm{d}z}\sum_{n=0}^{\infty}\left(\frac{-\nu}{1-\nu}\right)^n\prescript{\mathbb{C}}{c}I^{n\nu+1}_zf(z) \\
&=\frac{B(\nu)}{1-\nu}\cdot\frac{\mathrm{d}}{\mathrm{d}z}\sum_{n=0}^{\infty}\left(\frac{-\nu}{1-\nu}\right)^n\frac{\Gamma(-n\nu-1+1)}{2\pi i}\int_H(w-z)^{n\nu+1-1}f(w)\,\mathrm{d}w \\
&=\frac{B(\nu)}{2\pi i(1-\nu)}\cdot\frac{\mathrm{d}}{\mathrm{d}z}\sum_{n=0}^{\infty}\int_H\Gamma(-n\nu)\left(\frac{-\nu}{1-\nu}(w-z)\right)^{n\nu}f(w)\,\mathrm{d}w, \\
\prescript{ABC}{c}D^{\nu}_zf(z)&=\frac{B(\nu)}{1-\nu}\sum_{n=0}^{\infty}\left(\frac{-\nu}{1-\nu}\right)^n\prescript{RL}{c}I^{n\nu+1}_zf'(z) \\
&=\frac{B(\nu)}{1-\nu}\sum_{n=0}^{\infty}\left(\frac{-\nu}{1-\nu}\right)^n\prescript{\mathbb{C}}{c}I^{n\nu+1}_zf'(z) \\
&=\frac{B(\nu)}{1-\nu}\sum_{n=0}^{\infty}\left(\frac{-\nu}{1-\nu}\right)^n\frac{\Gamma(-n\nu-1+1)}{2\pi i}\int_H(w-z)^{n\nu+1-1}f'(w)\,\mathrm{d}w \\
&=\frac{B(\nu)}{2\pi i(1-\nu)}\sum_{n=0}^{\infty}\int_H\Gamma(-n\nu)\left(\frac{-\nu}{1-\nu}(w-z)\right)^{n\nu}f'(w)\,\mathrm{d}w,
\end{align*}
exactly as required. We require $\nu\not\in\mathbb{Q}$ so that $n\nu+1$ is never in $\mathbb{N}$ and the Riemann--Liouville integral $\prescript{RL}{c}I^{n\nu+1}_z$ can always be rewritten in Cauchy form using \eqref{RLdef:Cauchy}.
\end{proof}

Note that once again \cite{baleanu-fernandez,fernandez-baleanu,fernandez-baleanu-srivastava} the series connection between the AB and RL models has turned out to be useful in providing quick proofs of results concerning the AB model.

Formally, we can now define a modified Mittag-Leffler function $E^{\nu}$ by
\begin{equation}
\label{MML}
E^{\nu}(x)=\sum_{n=0}^{\infty}\Gamma(-n\nu)x^n,
\end{equation}
and, without justifying any convergence properties, rewrite the expressions \eqref{ABR:seriesCauchy}--\eqref{ABC:seriesCauchy} as follows:
\begin{align}
\label{ABR:Cauchy} \prescript{ABR}{c}D^{\nu}_zf(z)&=\frac{B(\nu)}{2\pi i(1-\nu)}\cdot\frac{\mathrm{d}}{\mathrm{d}z}\int_HE^{\nu}\left(\frac{-\nu}{1-\nu}(w-z)^{\nu}\right)f(w)\,\mathrm{d}w, \\
\label{ABC:Cauchy} \prescript{ABC}{c}D^{\nu}_zf(z)&=\frac{B(\nu)}{2\pi i(1-\nu)}\int_H E^{\nu}\left(\frac{-\nu}{1-\nu}(w-z)^{\nu}\right)f'(w)\,\mathrm{d}w.
\end{align}
These expressions can be justified rigorously provided the series from \eqref{MML} has sufficient convergence properties. We establish such properties in the following lemma.

\begin{lemma} \label{Lem:MMLconv}
The series \eqref{MML} for the modified Mittag-Leffler function converges provided that $\nu\in\mathbb{C}\backslash\mathbb{R}$, locally uniformly in both $x$ and $\nu$.
\end{lemma}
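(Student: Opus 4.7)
The plan is to estimate $|\Gamma(-n\nu)|$ using the reflection formula for $\Gamma$ together with Stirling's asymptotic expansion, and then to invoke the Weierstrass M-test. The starting point is the identity
\[
\Gamma(-n\nu) = \frac{-\pi}{\sin(n\nu\pi)\,\Gamma(1+n\nu)},
\]
obtained from $\Gamma(z)\Gamma(1-z)=\pi/\sin(\pi z)$ with $z=-n\nu$. This rewriting makes transparent the role of the hypothesis $\nu\not\in\mathbb{R}$: the poles of $\Gamma(-n\nu)$ are precisely the zeros of $\sin(n\nu\pi)$, which lie on the real axis. Off the real axis, both factors in the denominator admit good quantitative lower bounds.

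Next I would bound $|\sin(n\nu\pi)|$ from below. Writing $\nu=a+bi$ with $b\neq 0$, a routine expansion yields
\[
|\sin(n\nu\pi)|^{2} = \sin^{2}(na\pi)+\sinh^{2}(nb\pi)\geq\sinh^{2}(nb\pi),
\]
so $|\sin(n\nu\pi)|$ grows at least like $\tfrac{1}{2}e^{n\pi|b|}$ for large $n$. On any compact set $K\subset\mathbb{C}\setminus\mathbb{R}$, the quantity $|\Imag(\nu)|$ attains a positive minimum $b_{0}$, which turns this into a lower bound that is uniform in $\nu\in K$.

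Third, I would apply Stirling's asymptotic formula to $\Gamma(1+n\nu)$ in the regime $\Real(\nu)>0$ inherited from the use of $E^{\nu}$ in Lemma \ref{Lem:ABseriesCauchy}. The leading term of $\log|\Gamma(1+n\nu)|$ is $n\Real(\nu)\log n$, so $|\Gamma(1+n\nu)|$ exhibits factorial-type growth, uniformly in $\nu$ on compact subsets of the open right half-plane. Combining the two estimates gives a bound of the shape
\[
|\Gamma(-n\nu)|\leq C(K)\cdot n^{-n\,\Real(\nu)}\cdot e^{-n\pi|\Imag(\nu)|}\cdot M(K)^{n},
\]
so that $|\Gamma(-n\nu)|^{1/n}\to 0$ uniformly for $\nu\in K$ by the Cauchy--Hadamard criterion.

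Finally, this shows that $E^{\nu}(x)$ has infinite radius of convergence in $x$ with an envelope uniform in $\nu\in K$, so the Weierstrass M-test delivers uniform convergence on $K\times L$ for any compact $L\subset\mathbb{C}$, giving the asserted local uniform convergence. The main technical obstacle is maintaining uniformity in $\nu$: Stirling's formula must be applied with uniform control of the error term as $\nu$ varies over compact sets, and one must track how the constants degrade as $\nu$ approaches the real axis, where the lower bound on $|\sin(n\nu\pi)|$ weakens and the two factors in the denominator begin to cancel — reflecting the genuine breakdown of the formula at the poles of $\Gamma(-n\nu)$.
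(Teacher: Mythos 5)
Your proposal is correct and follows essentially the same route as the paper: both apply the reflection formula to write $\Gamma(-n\nu)=\pi/\big(\sin(-n\pi\nu)\,\Gamma(1+n\nu)\big)$ and then exploit the exponential decay of $1/\sin(n\pi\nu)$ off the real axis together with the growth of $\Gamma(1+n\nu)$, with your version merely making the estimates quantitative (the $\sinh$ lower bound, Stirling, Cauchy--Hadamard, Weierstrass M-test) where the paper argues asymptotically. Your explicit restriction to $\Real(\nu)>0$ in the Stirling step is in fact necessary --- for $\Real(\nu)<0$ the terms $\Gamma(-n\nu)$ grow factorially and the series has zero radius of convergence --- a hypothesis that the lemma's statement omits and that the paper's proof assumes only implicitly through the convergence of the Mittag-Leffler-type series it reduces to.
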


\begin{proof}
Using the definition \eqref{MML} and the reflection formula for the gamma function, we have
\begin{align*}
E^{\nu}(x)&=\sum_{n=0}^{\infty}\Gamma(-n\nu)x^n=\sum_{n=0}^{\infty}\frac{\pi}{\sin(-n\pi\nu)}\cdot\frac{x^n}{\Gamma(1+n\nu)} \\
&=2\pi i\sum_{n=0}^{\infty}\frac{1}{\exp(-in\pi\nu)-\exp(in\pi\nu)}\cdot\frac{x^n}{\Gamma(1+n\nu)}.
\end{align*}
The last series is a modification of the standard Mittag-Leffler function by a factor which, for fixed $\nu\in\mathbb{C}\backslash\mathbb{R}$ and depending on the sign of $\Imag(\nu)$, tends asymptotically to either $\exp(in\pi\nu)$ or $\exp(-in\pi\nu)$ as $n\rightarrow\infty$. In both cases, there is exponential decay, with the $n$th term of the series being asymptotically approximated by
\[\frac{2\pi i}{\Gamma(1+n\nu)}\left(\frac{x}{\exp(\pm i\pi\nu)}\right)^n,\]
and the series converges locally uniformly in each of $x\in\mathbb{C}$ and $\nu\in\mathbb{C}\backslash\mathbb{R}$.
\end{proof}

Combining the results of Lemma \ref{Lem:ABseriesCauchy} and Lemma \ref{Lem:MMLconv}, we find that the formulae in \eqref{ABR:Cauchy}--\eqref{ABC:Cauchy} are valid expressions for the ABR and ABC derivatives whenever $\Real(\nu)>0$ and $\nu\not\in\mathbb{R}$. In other words, we have the following theorem, which we shall now prove by another method analogous to that used to prove the Cauchy formula \eqref{RLdef:Cauchy} for RL fractional integrals.

\begin{theorem} \label{Thm:ABCauchy}
Let $c$ and $z$ be real numbers with $c<z$, let $\nu\in\mathbb{C}$ with $\Real(\nu)>0$ and $\nu\not\in\mathbb{R}$, and let $f$ be a complex function which is analytic on an open neighbourhood of the straight line-segment $[c,z]$. Then the formulae \eqref{ABR:Cdef} and \eqref{ABC:Cdef} for the ABR and ABC derivatives of $f$ can be rewritten in the following form:
\begin{align*}
\prescript{ABR}{c}D^{\nu}_zf(z)&=\frac{B(\nu)}{2\pi i(1-\nu)}\cdot\frac{\mathrm{d}}{\mathrm{d}z}\int_HE^{\nu}\left(\frac{-\nu}{1-\nu}(w-z)^{\nu}\right)f(w)\,\mathrm{d}w, \\
\prescript{ABC}{c}D^{\nu}_zf(z)&=\frac{B(\nu)}{2\pi i(1-\nu)}\int_H E^{\nu}\left(\frac{-\nu}{1-\nu}(w-z)^{\nu}\right)f'(w)\,\mathrm{d}w,
\end{align*}
where $E^{\nu}$ denotes the modified Mittag-Leffler function defined by \eqref{MML} and $H$ denotes the Hankel contour defined by \eqref{Hankel1}--\eqref{Hankel3}.
\end{theorem}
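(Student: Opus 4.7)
The plan is to adapt the classical derivation of the Cauchy formula \eqref{RLdef:Cauchy} for the Riemann--Liouville integral: start from the Hankel contour integral on the right-hand side, collapse $H=H_1\cup H_2\cup H_3$ onto the branch cut along $[c,z]$, compute the phase pickups on the two sides of the cut, and apply the reflection formula for $\Gamma$ to transform the modified Mittag-Leffler kernel $E^{\nu}$ into the standard Mittag-Leffler kernel $E_{\nu}$ appearing in \eqref{ABR:Cdef}--\eqref{ABC:Cdef}. This gives a direct proof, independent of and consistent with the series-based route through Lemmas \ref{Lem:ABseriesCauchy} and \ref{Lem:MMLconv}.

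Concretely, I would parametrize $H_1$ by $w=z-s$ with the branch convention $(w-z)^{\nu}=s^{\nu}e^{-i\pi\nu}$, and $H_3$ symmetrically with $(w-z)^{\nu}=s^{\nu}e^{+i\pi\nu}$, exactly as in \eqref{Hankel1} and \eqref{Hankel3}. For every $n\geq 1$ the integrand $\sim (w-z)^{n\nu}$ is of size $O(\epsilon^{\Real(n\nu)})$ on the small circle $H_2$, so $\int_{H_2}\to 0$ as $\epsilon\to 0$. After the change of variables $y=z-s$ and combining the two sides of the cut, I arrive, with the interchange of summation and integration justified by the local uniform convergence in Lemma \ref{Lem:MMLconv}, at the identity
\[
\int_H E^{\nu}\!\left(\tfrac{-\nu}{1-\nu}(w-z)^{\nu}\right) f(w)\,\mathrm{d}w \;=\; \int_c^z \Bigl[E^{\nu}(X_y\, e^{-i\pi\nu}) - E^{\nu}(X_y\, e^{i\pi\nu})\Bigr] f(y)\,\mathrm{d}y,
\]
where $X_y\coloneqq\tfrac{-\nu}{1-\nu}(z-y)^{\nu}$. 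The crux is then the term-by-term relation
\[
E^{\nu}(X e^{-i\pi\nu}) - E^{\nu}(X e^{i\pi\nu}) \;=\; 2\pi i\, E_{\nu}(X),
\]
which follows from \eqref{MML} via the reflection formula $\Gamma(-n\nu)\Gamma(1+n\nu)=-\pi/\sin(n\pi\nu)$, since each coefficient collapses to $\Gamma(-n\nu)(e^{-i\pi n\nu}-e^{i\pi n\nu})=-2i\sin(n\pi\nu)\,\Gamma(-n\nu)=2\pi i/\Gamma(1+n\nu)$, which is exactly the $n$th coefficient of $2\pi i\, E_{\nu}(X)$. Substituting back, the $2\pi i$ cancels against the prefactor $B(\nu)/[2\pi i(1-\nu)]$, and in the ABR case applying $\mathrm{d}/\mathrm{d}z$ recovers \eqref{ABR:Cdef}; the ABC case follows by the identical computation with $f'(y)$ in place of $f(y)$.

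The main obstacle is the $n=0$ term, where the nominal coefficient $\Gamma(0)$ in \eqref{MML} is singular while $\int_H f(w)\,\mathrm{d}w=0$ by Cauchy's theorem, producing a formal $\infty\cdot 0$ ambiguity. I would handle this by separating out the $n=0$ contribution from the sum and treating it directly: on the line-integral side it gives the constant term $1$ of $E_{\nu}(X_y)$, producing $\prescript{RL}{c}I^{1}_z f(z)=\int_c^z f(y)\,\mathrm{d}y$, whose derivative is $f(z)$; on the contour side it must be read via the limit $\lim_{s\to 0}\Gamma(-s)(e^{-i\pi s}-e^{i\pi s})=2\pi i$, which is precisely the consistent regularisation implicit in \eqref{MML} and already used in Lemma \ref{Lem:ABseriesCauchy}. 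With that convention in place, the remaining terms $n\geq 1$ are handled cleanly by the gamma-reflection identity above, and the two formulations coincide; the rest of the argument is bookkeeping of phase factors and standard Cauchy-contour estimates.
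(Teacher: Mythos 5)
Your proposal is correct and follows essentially the same route as the paper's own proof: collapse the Hankel contour onto the cut, discard the $O(\epsilon)$ contribution of $H_2$, and convert the two-sided difference $E^{\nu}\left(Xe^{-i\pi\nu}\right)-E^{\nu}\left(Xe^{i\pi\nu}\right)$ into $2\pi i\,E_{\nu}(X)$ term by term via the reflection formula, after which the $2\pi i$ cancels the prefactor. Your explicit treatment of the formally singular $n=0$ term is slightly more careful than the paper, which silently reads the constant term of $E^{\nu}$ as $1$, but this refinement does not alter the structure of the argument.
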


\begin{proof}
It will be enough to show that the integral transform
\begin{equation}
\label{ABtransform}
\mathcal{E_{\nu}}[f](z)=\int_c^zE_{\nu}\left(\frac{-\nu}{1-\nu}(z-y)^{\nu}\right)f(y)\,\mathrm{d}y
\end{equation}
is equivalent to the complex integral transform
\begin{equation}
\label{ABtransform:complex}
\mathcal{E^{\nu}}[f](z)=\frac{1}{2\pi i}\int_HE^{\nu}\left(\frac{-\nu}{1-\nu}(w-z)^{\nu}\right)f(w)\,\mathrm{d}w,
\end{equation}
because we can then combine these transforms with the differentiation operation to get the ABR and ABC derivatives in the form required.

We start from the complex form \eqref{ABtransform:complex} and recall the definition \eqref{Hankel1}--\eqref{Hankel3} of the Hankel contour.

For $w$ on the contour $H_1$ defined by \eqref{Hankel1}, we have \[w-z=re^{-i\pi}(z-c)=e^{-i\pi}(z-w),\] where in this case both $z-w$ and $z-c$ are positive reals. Similarly, for $w$ on the contour $H_3$ defined by \eqref{Hankel3}, we have \[w-z=re^{i\pi}(z-c)=e^{i\pi}(z-w).\] And for $w$ on the contour $H_2$ defined by \eqref{Hankel2}, we have \[w-z=\epsilon e^{i\theta}(z-c)=\epsilon' e^{i\theta'}\] where $\epsilon'\coloneqq\epsilon|z-c|$ is a small positive real independent of $w$. Therefore, the integral appearing in \eqref{ABtransform:complex} can be written as:
\begin{align*}
2\pi i\mathcal{E^{\nu}}[f](z)&=\int_{H_1}E^{\nu}\left(\frac{-\nu}{1-\nu}(w-z)^{\nu}\right)f(w)\,\mathrm{d}w+\int_{H_3}E^{\nu}\left(\frac{-\nu}{1-\nu}(w-z)^{\nu}\right)f(w)\,\mathrm{d}w \\
&\hspace{3cm}+\int_{H_2}E^{\nu}\left(\frac{-\nu}{1-\nu}(w-z)^{\nu}\right)f(w)\,\mathrm{d}w \\
&=\int_c^zE^{\nu}\left(\frac{-\nu}{1-\nu}e^{-i\pi\nu}(z-w)^{\nu}\right)f(w)\,\mathrm{d}w+\int_z^cE^{\nu}\left(\frac{-\nu}{1-\nu}e^{i\pi\nu}(z-w)^{\nu}\right)f(w)\,\mathrm{d}w \\
&\hspace{1.5cm}+\int_{-\pi}^{\pi}E^{\nu}\left(\frac{-\nu}{1-\nu}\epsilon^{\nu}e^{i\theta\nu}(z-c)^{\nu}\right)f(z+\epsilon e^{i\theta}(z-c))i\epsilon e^{i\theta}(z-c)\,\mathrm{d}\theta.
\end{align*}
The third integral (over $H_2$) has an integrand which can be written as
\[E^{\nu}\Big(O(\epsilon^{\nu})\Big)\big[f(z)+O(\epsilon)\big]O(\epsilon)=\big[1+O(\epsilon^{\nu})\big]\big[f(z)+O(\epsilon)\big]O(\epsilon)=O(\epsilon),\]
where the big-$O$ approximation is taken as $\epsilon$ approaches zero, and where we have used the fact that $\Real(\nu)>0$. Therefore, the integral over $H_2$ approaches zero as $\epsilon\rightarrow0$, and it can be ignored in the subsequent calculation. We are left with the following expression for the integral from \eqref{ABtransform:complex}:
\begin{equation}
\label{ABCauchy:step}
2\pi i\mathcal{E}^{\nu}[f](z)=\int_c^z\left[E^{\nu}\left(\tfrac{-\nu}{1-\nu}e^{-i\pi\nu}(z-w)^{\nu}\right)-E^{\nu}\left(\tfrac{-\nu}{1-\nu}e^{i\pi\nu}(z-w)^{\nu}\right)\right]f(w)\,\mathrm{d}w.
\end{equation}
Now, recalling the definition of the $E^{\nu}$ function, we rewrite the expression in square brackets on the right-hand side of \eqref{ABCauchy:step} as follows:
\begin{align*}
&\hspace{-1cm}E^{\nu}\left(\tfrac{-\nu}{1-\nu}e^{-i\pi\nu}(z-w)^{\nu}\right)-E^{\nu}\left(\tfrac{-\nu}{1-\nu}e^{i\pi\nu}(z-w)^{\nu}\right) \\
&=\sum_{n=0}^{\infty}\Gamma(-n\nu)\left(\frac{-\nu}{1-\nu}\right)^ne^{-in\pi\nu}(z-w)^{n\nu}-\sum_{n=0}^{\infty}\Gamma(-n\nu)\left(\frac{-\nu}{1-\nu}\right)^ne^{in\pi\nu}(z-w)^{n\nu} \\
&=\sum_{n=0}^{\infty}\Gamma(-n\nu)\left(\frac{-\nu}{1-\nu}\right)^n(z-w)^{n\nu}\left[e^{-in\pi\nu}-e^{in\pi\nu}\right] \\
&=\sum_{n=0}^{\infty}\left[\frac{\pi}{\Gamma(1+n\nu)\sin(\pi(-n\nu))}\right]\left(\frac{-\nu}{1-\nu}\right)^n(z-w)^{n\nu}\left[-2i\sin(n\pi\nu)\right] \\
&=\sum_{n=0}^{\infty}\frac{2\pi i}{\Gamma(1+n\nu)}\left(\frac{-\nu}{1-\nu}\right)^n(z-w)^{n\nu}=2\pi iE_{\nu}\left(\frac{-\nu}{1-\nu}(z-w)^{\nu}\right),
\end{align*}
where we have used the reflection formula for $\Gamma$, and where $E_{\nu}$ is the usual Mittag-Leffler function. Thus, \eqref{ABCauchy:step} becomes
\[2\pi i\mathcal{E^{\nu}}[f](z)=\int_c^z\left[2\pi iE_{\nu}\left(\frac{-\nu}{1-\nu}(z-w)^{\nu}\right)\right]f(w)\,\mathrm{d}w=2\pi i\mathcal{E_{\nu}}[f](z),\]
and we have proved the equivalence of \eqref{ABtransform} and \eqref{ABtransform:complex} as required.
\end{proof}

Now we are finally in a position to define analytic continuations of the ABR and ABC derivatives to the whole complex plane for $\nu$. It follows from Lemma \ref{Lem:MMLconv} that the formulae \eqref{ABR:Cauchy}--\eqref{ABC:Cauchy} established in Theorem \ref{Thm:ABCauchy} are in fact analytic functions of $\nu\in\mathbb{C}\backslash\mathbb{R}$, while we showed in Lemma \ref{Lem:ABderAC} that the equivalent formulae \eqref{ABR:Cdef}--\eqref{ABC:Cdef} are analytic functions of $\nu$ in the right half-plane $\Real(\nu)>0$. Analyticity in $z$ is preserved throughout if the function $f$ itself is analytic. Therefore, we propose the following definition.

\begin{definition} \label{Def:EABder}
Let $c$ be a fixed complex number and $f$ be a complex function which is analytic on an open star-domain $R$ centred at $c$.

The \textbf{extended ABR derivative} $\prescript{ABR}{c}D^{\nu}_zf(z)$ is defined for any $\nu\in\mathbb{C}\backslash\mathbb{R}^-$ and any $z\in R\backslash\{c\}$ by:
\begin{alignat}{2}
\label{EABR:1} \prescript{ABR}{c}D^{\nu}_zf(z)&=\frac{B(\nu)}{1-\nu}\cdot\frac{\mathrm{d}}{\mathrm{d}z}\int_c^zE_{\nu}\left(\frac{-\nu}{1-\nu}(z-y)^{\nu}\right)f(y)\,\mathrm{d}y,\quad&&\quad\Real(\nu)>0; \\
\label{EABR:2} \prescript{ABR}{c}D^{\nu}_zf(z)&=\frac{B(\nu)}{2\pi i(1-\nu)}\cdot\frac{\mathrm{d}}{\mathrm{d}z}\int_HE^{\nu}\left(\frac{-\nu}{1-\nu}(w-z)^{\nu}\right)f(w)\,\mathrm{d}w,\quad&&\quad\nu\in\mathbb{C}\backslash\mathbb{R}.
\end{alignat}

The \textbf{extended ABC derivative} $\prescript{ABC}{c}D^{\nu}_zf(z)$ is defined for any $\nu\in\mathbb{C}\backslash\mathbb{R}^-$ and any $z\in R\backslash\{c\}$ by:
\begin{alignat}{2}
\label{EABC:1} \prescript{ABC}{c}D^{\nu}_zf(z)&=\frac{B(\nu)}{1-\nu}\int_c^zE_{\nu}\left(\frac{-\nu}{1-\nu}(z-y)^{\nu}\right)f'(y)\,\mathrm{d}y,\quad&&\quad\Real(\nu)>0; \\
\label{EABC:2} \prescript{ABC}{c}D^{\nu}_zf(z)&=\frac{B(\nu)}{2\pi i(1-\nu)}\int_HE^{\nu}\left(\frac{-\nu}{1-\nu}(w-z)^{\nu}\right)f'(w)\,\mathrm{d}w,\quad&&\quad\nu\in\mathbb{C}\backslash\mathbb{R}.
\end{alignat}
\end{definition}

\begin{proposition}
\label{Prop:EAB:AC}
The extended ABR and ABC derivatives proposed in Definition \ref{Def:EABder} are:
\begin{enumerate}
\item analytic functions of both $z\in R\backslash\{c\}$ and $\nu\in\mathbb{C}\backslash\mathbb{R}^-_0$, provided $f$ and $B$ are analytic;
\item identical to the original formulae \eqref{ABR:def}--\eqref{ABC:def} in the case when $0<\nu<1$ and $c<z$ in $\mathbb{R}$.
\end{enumerate}
Therefore, they provide \textbf{analytic continuations} of the original ABR and ABC derivatives to complex values of $z$ and $\nu$.
\end{proposition}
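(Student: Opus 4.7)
The plan is to verify the two claims separately, using the preceding lemmas as building blocks. Part (2) is essentially immediate: formula \eqref{EABR:1} is identical in form to \eqref{ABR:def}, and formula \eqref{EABC:1} is identical to \eqref{ABC:def}, so when $\nu\in(0,1)$ and $c<z$ are real the extended definitions evaluate to the original ones by inspection. The only thing to observe is that for $0<\nu<1$ we are in the case $\Real(\nu)>0$, where the first of the two piecewise formulae applies.

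For part (1), I would argue analyticity in $\nu$ first, then in $z$. For analyticity in $\nu$, each branch of Definition \ref{Def:EABder} is analytic on its own domain: the branch valid for $\Real(\nu)>0$ is handled by Lemma \ref{Lem:ABderAC}, including the removability of the apparent singularity at $\nu=1$; the branch valid for $\nu\in\mathbb{C}\setminus\mathbb{R}$ is analytic because Lemma \ref{Lem:MMLconv} yields local uniform convergence of the modified Mittag-Leffler series in $\nu$, and the Hankel contour in \eqref{EABR:2}--\eqref{EABC:2} depends on $z$ and $c$ but not on $\nu$, so one may interchange differentiation in $\nu$ with the integral freely. Theorem \ref{Thm:ABCauchy} ensures the two branches agree on their overlap $\{\Real(\nu)>0\}\cap(\mathbb{C}\setminus\mathbb{R})$, which is a nonempty open set, so the piecewise prescription is unambiguous and, by the standard gluing principle for analytic functions, analytic in $\nu$ on the union, which is $\mathbb{C}\setminus\mathbb{R}^-_0$.

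For analyticity in $z\in R\setminus\{c\}$, with $f$ analytic on the star-domain $R$, I would again split into the two branches. In the first branch, the integrand of \eqref{EABR:1} is jointly analytic in $y$ and $z$ on $R\setminus\{c\}$, and the upper limit varies analytically, so differentiation under the integral together with the Leibniz rule for the moving endpoint yields an analytic result; the same reasoning applies to \eqref{EABC:1}. In the second branch, the Hankel contour depends on $z$, which complicates direct differentiation. The cleanest route is to invoke the series representation from Lemma \ref{Lem:ABseriesCauchy}: each summand in \eqref{ABR:seriesCauchy} is, via formula \eqref{RLdef:Cauchy}, a constant multiple of the Cauchy fractional integral of $f$, which is analytic in $z$ throughout $R\setminus\{c\}$ whenever $f$ is analytic on $R$. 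Local uniform convergence of the series, certified by combining Lemma \ref{Lem:Cseries} with Lemma \ref{Lem:MMLconv}, transfers analyticity from the summands to the sum.

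The main obstacle I anticipate is precisely this last point: one would prefer to differentiate directly under the Hankel integral in \eqref{EABR:2}--\eqref{EABC:2} to obtain analyticity in $z$, but the contour itself moves with $z$ and the integrand has a branch point at $w=z$, so a direct argument would require careful contour-deformation estimates. Falling back on the series representation sidesteps this entirely. Once analyticity in both variables is established on the stated domain and the identification with the classical definition has been verified on a nonempty open subset of $\mathbb{R}$, uniqueness of analytic continuation yields the final conclusion that Definition \ref{Def:EABder} provides a genuine analytic continuation of the classical ABR and ABC derivatives.
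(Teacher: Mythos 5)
Your proposal is correct and follows essentially the same route as the paper: Lemma \ref{Lem:ABderAC} for the branch $\Real(\nu)>0$, Lemma \ref{Lem:MMLconv} for the branch $\nu\in\mathbb{C}\backslash\mathbb{R}$, Theorem \ref{Thm:ABCauchy} for agreement on the overlap, and gluing to obtain analyticity on $\mathbb{C}\backslash\mathbb{R}^-_0$. The only difference is that you elaborate the $z$-analyticity (via the series representation to avoid the moving Hankel contour), whereas the paper simply asserts that analyticity in $z$ is preserved when $f$ is analytic; your extra care there is a strengthening, not a divergence.
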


\begin{proof}
We showed in Lemma \ref{Lem:ABderAC} that the formulae \eqref{EABR:1} and \eqref{EABC:1} are valid extensions of the original ABR and ABC derivatives from $0<\nu<1$ to $\Real(\nu)>0$.

Meanwhile, it follows from Lemma \ref{Lem:MMLconv} that the formulae \eqref{EABR:2} and \eqref{EABC:2} are in fact analytic functions of $\nu\in\mathbb{C}\backslash\mathbb{R}$, and Theorem \ref{Thm:ABCauchy} tells us that these formulae agree with \eqref{EABR:1} and \eqref{EABC:1} respectively on the region $\Real(\nu)>0,\nu\not\in\mathbb{R}$.

The above results were proved for real $z$, but analyticity in $z\in R\backslash\{c\}$ is preserved throughout if the function $f$ itself is analytic.
\end{proof}

\subsection{AB integrals and the iterated AB model} \label{Sec:main2}

The analytic continuation of the AB integral defined by \eqref{ABint:def} is much simpler to manage than that of the AB derivatives, since this time we can simply consider RL integrals rather than Mittag-Leffler series.

\begin{definition} \label{Def:EABint}
Let $c$ be a fixed complex number and $f$ be a complex function which is analytic on an open star-domain $R$ centred at $c$. The \textbf{extended AB integral} $\prescript{AB}{c}I^{\nu}_zf(z)$ is defined for any $\nu\in\mathbb{C}$ and any $z\in R\backslash\{c\}$ by:
\begin{equation}
\label{EABint}
\prescript{AB}{c}I^{\nu}_zf(z)=\frac{1-\nu}{B(\nu)}f(z)+\frac{\nu}{B(\nu)}\prescript{RL}{c}I^{\nu}_zf(z).
\end{equation}
\end{definition}

\begin{proposition}
\label{Prop:EABI:AC}
The extended AB integral proposed in Definition \ref{Def:EABint} is:
\begin{enumerate}
\item an analytic function of both $z\in R\backslash\{c\}$ and $\nu\in\mathbb{C}$, provided $f$ and $B$ are analytic and $B$ is nonzero;
\item identical to the original formula \eqref{ABint:def} in the case when $0<\nu<1$ and $c<z$ in $\mathbb{R}$.
\end{enumerate}
Therefore, it provides the \textbf{analytic continuation} of the original AB integral to complex values of $z$ and $\nu$.
\end{proposition}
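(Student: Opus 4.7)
The plan is to reduce both claims of Proposition \ref{Prop:EABI:AC} to facts about the Riemann--Liouville integral that were already recalled in Section \ref{Sec:intro}, so that no genuinely new analytic work is required; the only labour is in checking that the two summands of \eqref{EABint} behave well jointly in $z$ and $\nu$.

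For part (1), I would split the right-hand side of \eqref{EABint} into its two terms. The first term $\frac{1-\nu}{B(\nu)}f(z)$ is a product of an entire function of $\nu$, the reciprocal $1/B(\nu)$ (analytic in $\nu$ since $B$ is analytic and nonzero by hypothesis), and $f(z)$ (analytic in $z \in R$ by hypothesis), and is therefore jointly analytic in $(z,\nu) \in (R\backslash\{c\}) \times \mathbb{C}$. For the second term $\frac{\nu}{B(\nu)}\prescript{RL}{c}I^{\nu}_zf(z)$, I would interpret the Riemann--Liouville integral via its Cauchy form \eqref{RLdef:Cauchy}, that is, as $\prescript{\mathbb{C}}{c}D^{-\nu}_zf(z)$. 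As recalled in Section \ref{Sec:intro}, this representation is entire in $\nu \in \mathbb{C}$ when $f$ is analytic on a neighbourhood of $[c,z]$, and joint analyticity in $(z,\nu)$ follows from the standard parameter-dependence theorem for Hankel-type contour integrals: the contour $H$ can be chosen locally independent of $\nu$ and deforms continuously within the star-domain $R$ as $z$ varies, while the integrand is jointly analytic in $(w,z,\nu)$. Multiplication by the analytic factor $\nu/B(\nu)$ and addition of the first term preserves joint analyticity, giving claim (1).

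For part (2), the defining equation \eqref{EABint} is syntactically identical to the original \eqref{ABint:def}, so the only thing to verify is that in the real regime $0<\nu<1$, $c<z$, the interpretation of $\prescript{RL}{c}I^{\nu}_zf(z)$ used in Definition \ref{Def:EABint} agrees with the standard Riemann--Liouville integral \eqref{RLdef:int}. This is precisely the Nekrassov--Osler equivalence recalled in Section \ref{Sec:intro}, obtained by collapsing the Hankel contour onto the branch cut and applying the gamma reflection formula. Together with claim (1), the identity theorem then confirms that \eqref{EABint} is the unique analytic continuation of \eqref{ABint:def} to $(R\backslash\{c\}) \times \mathbb{C}$.

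There is no substantial obstacle here, since all the heavy lifting has already been done for the Riemann--Liouville model; the proposition is essentially a bookkeeping exercise. The only subtlety worth flagging explicitly is the hypothesis that $B(\nu)$ is analytic and nonzero on the $\nu$-domain of interest, without which the first term of \eqref{EABint} would fail to be analytic at zeros of $B$ and the joint analyticity assertion would need to be weakened to meromorphy.
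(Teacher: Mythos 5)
Your proposal is correct and follows essentially the same route as the paper, which simply cites the well-known analyticity of $\prescript{RL}{c}I^{\nu}_zf(z)$ in both $z$ and $\nu$; you merely spell out the term-by-term decomposition of \eqref{EABint} and the role of the hypothesis that $B$ is analytic and nonzero, which the paper leaves implicit.
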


\begin{proof}
This follows from the analyticity of the original Riemann--Liouville differintegral $\prescript{RL}{c}I^{\nu}_zf(z)$ in $z$ and $\nu$, which is well known.
\end{proof}

\begin{theorem} \label{Thm:ABintCauchy}
With all notation as in Definition \ref{Def:EABint}, an alternative form for the extended AB integral is as follows:
\begin{equation}
\label{ABint:Cauchy}
\prescript{AB}{c}I^{\nu}_zf(z)=\frac{1}{2\pi iB(\nu)}\int_H \left(\frac{1-\nu}{w-z}+\frac{\nu\Gamma(1-\nu)}{(w-z)^{1-\nu}}\right)f(w)\,\mathrm{d}z,\quad\quad z\in R\backslash\{c\},\nu\in\mathbb{C}\backslash\mathbb{N}.
\end{equation}
where $H$ is the Hankel contour defined in \eqref{Hankel1}--\eqref{Hankel3}.
\end{theorem}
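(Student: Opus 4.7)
The plan is to split the integrand into its two natural pieces and identify each with a known Cauchy-type integral representation, then recombine using Definition \ref{Def:EABint}. Write the right-hand side of \eqref{ABint:Cauchy} as
\[
\frac{1-\nu}{2\pi iB(\nu)}\int_H\frac{f(w)}{w-z}\,\mathrm{d}w \;+\; \frac{\nu\Gamma(1-\nu)}{2\pi iB(\nu)}\int_H(w-z)^{\nu-1}f(w)\,\mathrm{d}w,
\]
and aim to show that the first term equals $\frac{1-\nu}{B(\nu)}f(z)$ and the second term equals $\frac{\nu}{B(\nu)}\prescript{RL}{c}I^{\nu}_zf(z)$.

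For the first integral, the integrand $f(w)/(w-z)$ is single-valued and meromorphic in $w$ on a neighbourhood of the star-domain $R$ (with only a simple pole at $w=z$), so there is no branch-cut issue and the Hankel contour $H$ from \eqref{Hankel1}--\eqref{Hankel3} can be regarded as a closed loop based at $c$ which encircles $z$ exactly once in the counterclockwise sense. Cauchy's integral formula then gives $\int_H f(w)/(w-z)\,\mathrm{d}w = 2\pi i f(z)$. A short alternative is to note that as $\epsilon \to 0$ the contributions from $H_1$ and $H_3$ cancel because there is no monodromy, and $H_2$ contributes $2\pi i f(z)$ by the residue.

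For the second integral, I would apply the Cauchy-type formula \eqref{RLdef:Cauchy} for the Riemann--Liouville differintegral directly, taking the order of differentiation to be $-\nu$. Specifically, \eqref{RLdef:Cauchy} with $\nu$ replaced by $-\nu$ (which is permitted precisely when $-\nu\notin\mathbb{Z}^-$, i.e.\ $\nu\notin\mathbb{N}$, matching the exclusion in the statement) yields
\[
\prescript{RL}{c}I^{\nu}_zf(z) \;=\; \prescript{\mathbb{C}}{c}D^{-\nu}_zf(z) \;=\; \frac{\Gamma(1-\nu)}{2\pi i}\int_H(w-z)^{\nu-1}f(w)\,\mathrm{d}w,
\]
so that $\int_H(w-z)^{\nu-1}f(w)\,\mathrm{d}w = \frac{2\pi i}{\Gamma(1-\nu)}\,\prescript{RL}{c}I^{\nu}_zf(z)$. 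Substituting back kills the $\Gamma(1-\nu)$ factor and leaves exactly $\frac{\nu}{B(\nu)}\,\prescript{RL}{c}I^{\nu}_zf(z)$.

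Adding the two contributions reproduces \eqref{EABint}, as required. There is no substantial obstacle; the only points that need care are (i) the compatibility of the branch of $(w-z)^{\nu-1}$ used in the statement with the convention adopted for $H$ in \eqref{Hankel1}--\eqref{Hankel3}, which is the same as in \eqref{RLdef:Cauchy} and so can be quoted, and (ii) the exclusion $\nu\in\mathbb{C}\backslash\mathbb{N}$, which is precisely what is needed both for $\Gamma(1-\nu)$ to be finite and for \eqref{RLdef:Cauchy} to apply to $\prescript{RL}{c}I^{\nu}_z$. (The typographical $\mathrm{d}z$ in the statement should be read as $\mathrm{d}w$.)
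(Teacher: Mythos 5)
Your proposal is correct and follows essentially the same route as the paper's own proof: decompose the kernel into the two terms, identify the first with the Cauchy integral formula over $H$ and the second with the Cauchy-type formula \eqref{RLdef:Cauchy} applied at order $-\nu$, and recombine via \eqref{EABint}. The extra remarks you supply (cancellation of $H_1$ and $H_3$ for the single-valued term, the role of the exclusion $\nu\notin\mathbb{N}$, and the $\mathrm{d}z$ typo) are correct refinements of the same argument.
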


\begin{proof}
We use the Cauchy form \eqref{RLdef:Cauchy} for the Riemann--Liouville differintegral and the standard Cauchy integral formula for analytic functions:
\begin{align*}
f(z)&=\frac{1}{2\pi i}\int_{|w-z|=\epsilon}\frac{f(w)}{w-z}\,\mathrm{d}w=\frac{1}{2\pi i}\int_H\frac{f(w)}{w-z}\,\mathrm{d}w, \\
\prescript{RL}{c}I^{\nu}_zf(z)&=\prescript{\mathbb{C}}{c}D^{-\nu}_zf(z)=\frac{\Gamma(-\nu+1)}{2\pi i}\int_H(w-
z)^{\nu-1}f(w)\,\mathrm{d}w.
\end{align*}
By taking a linear combination of these two integrals, the result follows.
\end{proof}

\begin{remark} \label{Rem:ABnodelta}
For the AB integral operator, an advantage of the Cauchy formula \eqref{ABint:Cauchy} over the original formula \eqref{EABint} is that the former enables us to express the operator in terms of a single kernel function. In the original definition \eqref{ABint:def} of AB integrals, it was necessary to take a linear combination of an integral expression with the function $f$ itself. In the extended definition \eqref{ABint:Cauchy}, the AB integral is expressed as the convolution of $f$ with a single simple kernel function.

The reason for this change is that, when we are doing complex integration, any function $f(z)$ can be expressed as a convolution of itself with the reciprocal function $\frac{1}{z}$. By contrast, in real analysis the only way to express a function as a convolution of itself is to use the Dirac delta function, which is not a function and which makes the analysis more complicated.

Thus, we already see an advantage of the complex-analytic approach in terms of the simplicity and elegance of the definitions.
\end{remark}

We now consider the iterated AB model which was defined in equations \eqref{IAB:RLdef}--\eqref{IAB:deltadef} above. For these operators, the series formula using RL integrals (namely \eqref{IAB:RLdef}, analogous to \eqref{ABR:series}--\eqref{ABC:series} for the AB derivatives) is in a nice simple form, but the integral formula using a complicated kernel (namely \eqref{IAB:deltadef}, analogous to \eqref{ABR:def}--\eqref{ABC:def} for the AB derivatives) is much trickier and the kernel is not even a function. Thus, it would be useful to have a different formulation of the iterated AB differintegral, using an integral with a nicer kernel function. As we shall see, such a formulation can be given by using complex analysis and Cauchy-type integrals as above. This is a major advantage of the complex approach proposed in this paper: it gives a better and more elegant integral representation of the iterated AB differintegral.

\begin{lemma}
\label{Lem:IABintAC}
Let $c$ and $z$ be real numbers with $c<z$, let $f$ be an $L^1$ function on an interval containing $[c,z]$, and assume the multiplier function $B(\nu)$ is analytic and nonzero. Then the integral formula
\begin{equation}
\label{IAB:Cdef}
\prescript{IAB}{c}I^{\nu,\mu}_{z}f(z)=\left(\frac{1-\nu}{B(\nu)}\right)^{\mu}f(z)+\int_{c}^z\sum_{n=1}^{\infty}\frac{\binom{\mu}{n}(1-\nu)^{\mu-n}\nu^n}{B(\nu)^{\mu}\Gamma(n\nu)}(z-w)^{n\nu-1}f(w)\,\mathrm{d}w
\end{equation}
is well-defined for all $\mu,\nu\in\mathbb{C}$ with $\Real(\nu)>0$ and $\nu\neq1$, and it yields a function which is entire in $\mu$ and analytic  in $\nu$ on the domain $\{\nu\in\mathbb{C}:\Real(\nu)>0,\nu\neq1\}$.
\end{lemma}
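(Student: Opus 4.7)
The plan is to reduce the lemma to a convergence analysis of the series appearing in the integrand of \eqref{IAB:Cdef}. Once absolute and uniform convergence on compact subsets of the parameter space is secured, interchange of sum and integral will follow by Fubini--Tonelli, and the claimed analyticity in $\mu$ and $\nu$ will follow from Weierstrass's theorem on uniform limits of holomorphic functions.

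First I would verify term-by-term regularity. For each $n\geq 1$, the coefficient
\[
c_n(\mu,\nu)\coloneqq\frac{\binom{\mu}{n}(1-\nu)^{\mu-n}\nu^n}{B(\nu)^{\mu}\,\Gamma(n\nu)}
\]
is entire in $\mu$, since $\binom{\mu}{n}$ is a polynomial of degree $n$ in $\mu$ while $(1-\nu)^{\mu-n}$ and $B(\nu)^{\mu}=\exp(\mu\log B(\nu))$ are entire in $\mu$ thanks to the analyticity and nonvanishing of $B$. It is also analytic in $\nu$ on $\{\Real(\nu)>0,\,\nu\neq 1\}$, since $(1-\nu)^{\mu-n}$ is holomorphic on the slit plane $\mathbb{C}\setminus\{1\}$ and $1/\Gamma(n\nu)$ is entire. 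The remaining factor $(z-w)^{n\nu-1}$ is entire in $\nu$ for each fixed $w\in[c,z)$.

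The key technical step is the convergence estimate. Stirling's formula yields factorial-type decay of $|\Gamma(n\nu)|^{-1}$ as $n\to\infty$ whenever $\Real(\nu)>0$, while $|\binom{\mu}{n}|$ grows at most polynomially in $n$ for fixed $\mu$, and the remaining factors in $c_n$ grow at most exponentially in $n$ on compact subsets of the parameter space. Combining these shows that for every compact $K\subset\{(\mu,\nu):\Real(\nu)>0,\,\nu\neq 1\}$ one has
\[
\sum_{n=1}^{\infty}\sup_{(\mu,\nu)\in K}|c_n(\mu,\nu)|\int_c^z(z-w)^{n\Real(\nu)-1}|f(w)|\,\mathrm{d}w<\infty,
\]
where each $w$-integral is finite because $\Real(n\nu-1)>-1$ for $n\geq 1$ and $f\in L^1$. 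By Fubini--Tonelli the sum and integral may be swapped, and the series of integrated terms converges uniformly on $K$, so the right-hand side of \eqref{IAB:Cdef} is well-defined.

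With uniform convergence on compacts in hand, analyticity of the limit follows by Weierstrass's theorem: each integrated term $\int_c^z c_n(\mu,\nu)(z-w)^{n\nu-1}f(w)\,\mathrm{d}w$ is entire in $\mu$ and analytic in $\nu$ on the prescribed domain (as one verifies by Morera's theorem combined with Fubini, since the integrand is holomorphic in $(\mu,\nu)$ for each $w$ and dominated uniformly on small parameter neighbourhoods), and uniform convergence preserves analyticity. The main obstacle I anticipate is securing a dominating series that remains uniform in $w$ despite the integrable endpoint singularity at $w=z$; the cleanest route is to keep the $w$-integration inside the bound as above, rather than seeking a pointwise-in-$w$ majorant, so that the $L^1$ hypothesis on $f$ can be exploited directly and the factorial decay of $1/\Gamma(n\nu)$ comfortably swamps the $(z-c)^{n\Real(\nu)}/(n\Real(\nu))$ growth coming from integrating the singular factor.
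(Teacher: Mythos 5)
Your proposal is correct and follows essentially the same route as the paper's own proof: both start from the defining series \eqref{IAB:2def}, observe that the endpoint singularity $(z-w)^{n\nu-1}$ is integrable precisely when $\Real(n\nu)>0$ for all $n\geq1$, i.e.\ $\Real(\nu)>0$, and deduce the claimed analyticity from locally uniform convergence of the Mittag-Leffler-type series of coefficients. You simply supply more of the supporting detail (the Stirling-type super-exponential decay of $1/\Gamma(n\nu)$, the Fubini--Tonelli interchange, and the Morera/Weierstrass step) than the paper, which asserts the locally uniform convergence and the resulting analyticity without spelling these estimates out.
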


\begin{proof}
We start from the formula \eqref{IAB:2def} for the iterated AB differintegral, and note that this is identical to the series in the statement of the lemma. We just need to check convergence and analyticity, for which we rewrite the integral expression as follows:
\begin{align*}
\prescript{IAB}{c}I^{\nu,\mu}_{z}f(z)-\left(\frac{1-\nu}{B(\nu)}\right)^{\mu}f(z)&=\int_{c}^z\sum_{n=1}^{\infty}\frac{\binom{\mu}{n}(1-\nu)^{\mu-n}\nu^n}{B(\nu)^{\mu}\Gamma(n\nu)}(z-w)^{n\nu-1}f(w)\,\mathrm{d}w \\
&\hspace{-2cm}=\left(\frac{1-\nu}{B(\nu)}\right)^{\mu}\int_{c}^z\sum_{n=1}^{\infty}\frac{\Gamma(\mu+1)(1-\nu)^{-n}\nu^n}{\Gamma(n+1)\Gamma(\mu-n+1)\Gamma(n\nu)}(z-w)^{n\nu}\frac{f(w)}{z-w}\,\mathrm{d}w \\
&\hspace{-2cm}=\left(\frac{1-\nu}{B(\nu)}\right)^{\mu}\int_{c}^z\sum_{n=1}^{\infty}\frac{(\mu)(\mu-1)\dots(\mu-n+1)}{n!\Gamma(n\nu)}\left(\frac{\nu}{1-\nu}(z-w)^{\nu}\right)^n\frac{f(w)}{z-w}\,\mathrm{d}w
\end{align*}
The singularity at $w=z$ is integrable provided that the exponent $n\nu-1$ is always greater than $-1$ in real part, hence we require $\Real(\nu)>0$. Meanwhile, the series in $n$ is locally uniformly convergent for all $z-w\in\mathbb{C}$. Analyticity in $\mu$ and $\nu$ is clear from the formula, with the only potential singularity being at $\nu=1$. The result follows.
\end{proof}

\begin{remark}
\label{Rem:IABserAC}
We now know that the series \eqref{IAB:RLdef} of Riemann--Liouville integrals also converges locally uniformly to a function which is entire in $\mu$ and analytic in $\nu$ on the domain $\{\nu\in\mathbb{C}:\Real(\nu)>0,\nu\neq1\}$. This is because, in the proof of Lemma \ref{Lem:IABintAC}, we saw that the series in the integrand was locally uniformly convergent, so the summation and integration may be swapped to yield a formula equivalent to \eqref{IAB:RLdef}.

Clearly, too, the formula \eqref{IAB:deltadef} is also well-defined and analytic for these extended complex values of $\mu$ and $\nu$, since it is exactly equivalent to the formula \eqref{IAB:2def} in all cases.
\end{remark}

\begin{lemma} \label{Lem:IABseriesCauchy}
Let $c$ and $z$ be real numbers with $c<z$, let $\nu\in\mathbb{C}$ with $\Real(\nu)>0$ and $\nu\not\in\mathbb{Q}$, and let $f$ be a complex function which is analytic on an open neighbourhood of the straight line-segment $[c,z]$. Then the iterated AB differintegral of $f$ can be written in the following form:
\begin{equation}
\label{IAB:seriesCauchy} 
\prescript{IAB}{c}I^{\nu,\mu}_{z}f(z)=\frac{1}{2\pi i}\left(\frac{1-\nu}{B(\nu)}\right)^{\mu}\sum_{n=0}^{\infty}\int_H\frac{\Gamma(\mu+1)\Gamma(1-n\nu)}{\Gamma(n+1)\Gamma(\mu-n+1)}\left(\frac{\nu}{1-\nu}(w-z)^{\nu}\right)^n\frac{f(w)}{w-z}\,\mathrm{d}w
\end{equation}
where the series is locally uniformly convergent and the Hankel contour $H$ is defined as above by equations \eqref{Hankel1}--\eqref{Hankel3}.
\end{lemma}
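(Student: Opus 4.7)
The plan is to mimic closely the proof of Lemma \ref{Lem:ABseriesCauchy}: start from the Riemann--Liouville series form \eqref{IAB:RLdef} of the iterated AB differintegral, and then rewrite each Riemann--Liouville integral in Cauchy contour form via \eqref{RLdef:Cauchy}.

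More concretely, I would first write
\[
\prescript{IAB}{c}I^{\nu,\mu}_{z}f(z)=\sum_{n=0}^{\infty}\frac{\binom{\mu}{n}(1-\nu)^{\mu-n}\nu^n}{B(\nu)^{\mu}}\,\prescript{RL}{c}I_{z}^{n\nu}f(z),
\]
and then replace each $\prescript{RL}{c}I_{z}^{n\nu}f(z)=\prescript{\mathbb{C}}{c}D^{-n\nu}_{z}f(z)$ by its Cauchy representation, which gives
\[
\prescript{RL}{c}I_{z}^{n\nu}f(z)=\frac{\Gamma(1-n\nu)}{2\pi i}\int_H (w-z)^{n\nu-1}f(w)\,\mathrm{d}w
\]
for $n\geq 1$. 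For $n=0$ the factor $\Gamma(1-n\nu)=1$ and the contour integral reduces (by collapsing $H$ to the small circle $H_2$) to the ordinary Cauchy integral formula $f(z)=\frac{1}{2\pi i}\int_H\frac{f(w)}{w-z}\,\mathrm{d}w$, so the $n=0$ term also fits the same template. Next I would expand $\binom{\mu}{n}=\frac{\Gamma(\mu+1)}{\Gamma(n+1)\Gamma(\mu-n+1)}$, factor out $\bigl(\tfrac{1-\nu}{B(\nu)}\bigr)^{\mu}$, and write $(w-z)^{n\nu-1}=(w-z)^{n\nu}/(w-z)$ to group the $n$-dependent factors into $\bigl(\tfrac{\nu}{1-\nu}(w-z)^{\nu}\bigr)^{n}$; this produces exactly the right-hand side of \eqref{IAB:seriesCauchy}.

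The main obstacle, as usual, is justifying the interchange of summation and integration that is implicit in passing from the termwise Cauchy rewriting to the displayed series-of-integrals formula. This can be handled in the same way as in Lemma \ref{Lem:ABseriesCauchy} and Lemma \ref{Lem:IABintAC}: the series inside the integrand, after grouping, has the form $\sum_{n}\frac{\Gamma(\mu+1)\Gamma(1-n\nu)}{\Gamma(n+1)\Gamma(\mu-n+1)}\bigl(\tfrac{\nu}{1-\nu}(w-z)^{\nu}\bigr)^{n}$, and the factorial-type growth of $\Gamma(n+1)$ in the denominator dominates the factor $\Gamma(1-n\nu)$ (whose reflection-formula rewriting as $\pi/[\Gamma(n\nu)\sin(\pi n\nu)]$ contributes only at most exponential growth), exactly as in the convergence argument used for the modified Mittag-Leffler series in Lemma \ref{Lem:MMLconv}. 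This gives local uniform convergence on the Hankel contour, so Fubini/uniform-convergence arguments legitimise the swap.

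Finally, the hypothesis $\nu\notin\mathbb{Q}$ is invoked exactly for the reason given at the end of the proof of Lemma \ref{Lem:ABseriesCauchy}: it guarantees that no exponent of the form $n\nu$ with $n\geq 1$ is an integer, so that $\Gamma(1-n\nu)$ is finite and the Cauchy representation \eqref{RLdef:Cauchy} of the Riemann--Liouville integral is genuinely available for every term of the series. Analyticity of $f$ near $[c,z]$ is what allows the deformation of the ordinary line integral into the Hankel contour in the first place, so the whole manipulation is rigorous under the stated hypotheses.
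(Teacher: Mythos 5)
Your proposal is correct and follows essentially the same route as the paper: start from the Riemann--Liouville series form \eqref{IAB:RLdef}, rewrite each term via the Cauchy representation \eqref{RLdef:Cauchy}, expand $\binom{\mu}{n}$, and invoke $\nu\notin\mathbb{Q}$ for exactly the reason the paper gives. Your extra paragraph on interchanging summation and integration is not actually needed here, since the target formula \eqref{IAB:seriesCauchy} is still a series of integrals (the swap only becomes relevant in Theorem \ref{Thm:IABCauchy}), but it harmlessly supports the ``locally uniformly convergent'' claim in the statement.
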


\begin{proof}
We use the series formula \eqref{IAB:RLdef}, since by Remark \ref{Rem:IABserAC} we know that this formula is still valid for complex $\mu$ and $\nu$ in the stated domains. Rewriting each RL integral from this series in the Cauchy form \eqref{RLdef:Cauchy}, we find:
\begin{align*}
\prescript{IAB}{c}I^{\nu,\mu}_{z}f(z)&=\sum_{n=0}^{\infty}\frac{\binom{\mu}{n}(1-\nu)^{\mu-n}\nu^n}{B(\nu)^{\mu}}\prescript{RL}{c}I_{z}^{n\nu}f(z) \\
&=\sum_{n=0}^{\infty}\frac{\binom{\mu}{n}(1-\nu)^{\mu-n}\nu^n}{B(\nu)^{\mu}}\prescript{\mathbb{C}}{c}I_{z}^{n\nu}f(z) \\
&=\sum_{n=0}^{\infty}\frac{\binom{\mu}{n}(1-\nu)^{\mu-n}\nu^n}{B(\nu)^{\mu}}\frac{\Gamma(-n\nu+1)}{2\pi i}\int_H(w-z)^{n\nu-1}f(w)\,\mathrm{d}w \\
&=\left(\frac{1-\nu}{B(\nu)}\right)^{\mu}\sum_{n=0}^{\infty}\int_H\binom{\mu}{n}\frac{\Gamma(-n\nu+1)}{2\pi i}\left(\frac{\nu}{1-\nu}\right)^n(w-z)^{n\nu-1}f(w)\,\mathrm{d}w,
\end{align*}
which yields the stated formula when we recall the definition of the binomial coefficient $\binom{\mu}{n}$. We require $\nu\not\in\mathbb{Q}$ so that $n\nu$ is never in $\mathbb{N}$ and the Riemann--Liouville integral $\prescript{RL}{c}I^{n\nu}_z$ can always be rewritten in Cauchy form using \eqref{RLdef:Cauchy}.
\end{proof}

Now formally we can define a function $E^{(\mu,\nu)}(x)$ by the series
\begin{equation}
\label{MML2}
E^{\mu,\nu}(x)=\sum_{n=0}^{\infty}\frac{\Gamma(\mu+1)\Gamma(1-n\nu)}{\Gamma(n+1)\Gamma(\mu-n+1)}x^n=\sum_{n=0}^{\infty}\binom{\mu}{n}\Gamma(-n\nu+1)x^n,
\end{equation}
and, without justifying any convergence properties, rewrite the series \eqref{IAB:seriesCauchy} as follows:
\[\prescript{IAB}{c}I^{\nu,\mu}_{z}f(z)=\frac{1}{2\pi i}\left(\frac{1-\nu}{B(\nu)}\right)^{\mu}\int_HE^{(\mu,\nu)}\left(\frac{\nu}{1-\nu}(w-z)^{\nu}\right)\frac{f(w)}{w-z}\,\mathrm{d}w.\]
As before, this expression can be justified rigorously provided that the series \eqref{MML2} has sufficient convergence properties.

\begin{lemma} \label{Lem:MML2conv}
The series \eqref{MML2} converges provided that $\nu\in\mathbb{C}\backslash\mathbb{R}$, locally uniformly in $x$, $\mu$, and $\nu$.
\end{lemma}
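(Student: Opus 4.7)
The strategy mirrors the proof of Lemma \ref{Lem:MMLconv} closely, since the series \eqref{MML2} differs from \eqref{MML} only by the extra binomial factor $\binom{\mu}{n}$ and by a shift in the gamma argument. The plan is to isolate three independent factors in the general term, estimate each separately, and then combine.

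First I would apply the reflection formula $\Gamma(z)\Gamma(1-z)=\pi/\sin(\pi z)$ with $z=n\nu$ to rewrite
\[\Gamma(1-n\nu) = \frac{\pi}{\sin(n\pi\nu)\,\Gamma(n\nu)},\]
which is valid for every $n\geq 1$ as soon as $\nu\notin\mathbb{R}$ (so that $n\nu\notin\mathbb{Z}$). The factor $1/\Gamma(n\nu)$ then supplies super-exponential decay via Stirling's formula. Next, $1/\sin(n\pi\nu)$ can be analysed exactly as in the proof of Lemma \ref{Lem:MMLconv}: writing the sine in its exponential form and using $\Imag(\nu)\neq 0$, one of $e^{\pm in\pi\nu}$ dominates asymptotically, and $|\sin(n\pi\nu)|^{-1}$ decays like $2\,e^{-n\pi|\Imag(\nu)|}$.

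The only genuinely new ingredient is the binomial coefficient $\binom{\mu}{n}$. For this I would invoke the classical asymptotic
\[\binom{\mu}{n}\sim\frac{(-1)^n}{\Gamma(-\mu)}\,n^{-\mu-1}\qquad(n\to\infty),\]
valid for $\mu\notin\mathbb{N}\cup\{0\}$ (when $\mu$ is a non-negative integer the series terminates and convergence is trivial). This yields a bound polynomial in $n$ that is locally uniform in $\mu$ on compact subsets of $\mathbb{C}$. Combining the three estimates, the $n$th term of \eqref{MML2} is eventually dominated by a constant multiple of
\[\frac{n^{k}\,e^{-n\pi|\Imag(\nu)|}\,|x|^n}{|\Gamma(1+n\nu)|},\]
where $k$ depends only on the compact set of $\mu$-values, and this decays faster than any geometric series; hence the sum converges absolutely and locally uniformly.

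The main obstacle is bookkeeping the three sources of uniformity simultaneously: on a compact subset of $\mathbb{C}\setminus\mathbb{R}$ the quantity $|\Imag(\nu)|$ is bounded away from zero, on a compact set in $\mu$ the polynomial bound for $\binom{\mu}{n}$ can be taken uniform, and local uniformity in $x$ then follows automatically from the super-exponential decay of $1/\Gamma(1+n\nu)$. None of these ingredients is individually difficult, but they must be tracked carefully through the final estimate.
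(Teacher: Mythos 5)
Your proposal is correct and follows essentially the same route as the paper: the paper's proof simply states that the argument of Lemma \ref{Lem:MMLconv} carries over because the extra factor $\binom{\mu}{n}$ does not affect convergence (citing \cite[\S IV.3]{miller-ross}), which is precisely the polynomial bound $\binom{\mu}{n}=O(n^{-\Real(\mu)-1})$, locally uniform in $\mu$, that you make explicit. Your version is just a fleshed-out form of the same reduction.
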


\begin{proof}
This can be proved in the same way as Lemma \ref{Lem:MMLconv}. Indeed, the series \eqref{MML2} is just a modification of the series \eqref{MML} by a factor of $\binom{\mu}{n}$ in the $n$th term, which does not affect convergence \cite[\S IV.3]{miller-ross}.
\end{proof}

\begin{theorem} \label{Thm:IABCauchy}
Let $c$ and $z$ be real numbers with $c<z$, let $\mu,\nu\in\mathbb{C}$ with $\Real(\nu)>0$ and $\nu\not\in\mathbb{R}$, and let $f$ be a complex function which is analytic on an open neighbourhood of the straight line-segment $[c,z]$. Then the iterated AB differintegral of $f$ can be rewritten in the following form:
\begin{align*}
\prescript{IAB}{c}I^{\nu,\mu}_{z}f(z)&=\frac{1}{2\pi i}\left(\frac{1-\nu}{B(\nu)}\right)^{\mu}\int_HE^{(\mu,\nu)}\left(\frac{\nu}{1-\nu}(w-z)^{\nu}\right)\frac{f(w)}{w-z}\,\mathrm{d}w \\
&=\frac{1}{2\pi i}\left(\frac{1-\nu}{B(\nu)}\right)^{\mu}\int_H\sum_{n=0}^{\infty}\frac{\Gamma(\mu+1)\Gamma(1-n\nu)}{\Gamma(n+1)\Gamma(\mu-n+1)}\left(\frac{\nu}{1-\nu}(w-z)^{\nu}\right)^n\frac{f(w)}{w-z}\,\mathrm{d}w,
\end{align*}
where $E^{(\mu,\nu)}$ denotes the modified double Mittag-Leffler function defined by \eqref{MML} and $H$ denotes the Hankel contour defined by \eqref{Hankel1}--\eqref{Hankel3}.
\end{theorem}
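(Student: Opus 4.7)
The plan is to derive the stated formula directly from Lemma \ref{Lem:IABseriesCauchy}, since that lemma already provides an identical expression except with the sum placed outside the integral over $H$. The hypothesis $\nu\in\mathbb{C}\setminus\mathbb{R}$ is strictly stronger than the hypothesis $\nu\not\in\mathbb{Q}$ of Lemma \ref{Lem:IABseriesCauchy}, so we may invoke it. Thus the only real content of the theorem is to justify interchanging the summation over $n$ with the contour integral along $H$, and then to recognise the resulting inner series as the modified double Mittag-Leffler function $E^{(\mu,\nu)}$ defined in \eqref{MML2}.

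First, I would describe the set of values taken by the argument $\tfrac{\nu}{1-\nu}(w-z)^{\nu}$ as $w$ varies on the Hankel contour $H$. On the straight subcontours $H_1$ and $H_3$ we have $w-z=e^{\mp i\pi}(z-w)$ with $0<z-w\leq z-c$, while on the small circular arc $H_2$ we have $|w-z|=\epsilon|z-c|$. In all cases, because $\Real(\nu)>0$, the quantity $(w-z)^{\nu}$ stays bounded on $H$, so the argument $\tfrac{\nu}{1-\nu}(w-z)^{\nu}$ lies in a compact subset $K\subset\mathbb{C}$ depending only on $\nu$, $z$, and $c$. By Lemma \ref{Lem:MML2conv}, the series \eqref{MML2} defining $E^{(\mu,\nu)}$ converges locally uniformly in its argument for $\nu\in\mathbb{C}\setminus\mathbb{R}$, and so in particular it converges uniformly on $K$.

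Next, since $f$ is analytic on a neighbourhood of $[c,z]$ and hence bounded on $H$, and since $1/(w-z)$ is integrable along $H$ (this is just the usual Hankel representation of a Cauchy kernel), uniform convergence of the series on $K$ implies that we may interchange summation and integration by standard dominated convergence. This transforms the series-of-integrals expression from Lemma \ref{Lem:IABseriesCauchy} into the integral-of-series expression given in the theorem. Recognising the inner series by comparison with \eqref{MML2} yields the closed form with $E^{(\mu,\nu)}$, completing the proof.

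The main obstacle, as usual in this paper, is purely the legality of the interchange; once the boundedness of $(w-z)^{\nu}$ on $H$ (using $\Real(\nu)>0$) is coupled with the locally uniform convergence from Lemma \ref{Lem:MML2conv} (using $\nu\notin\mathbb{R}$), everything else is formal. No new estimates are needed beyond those already established for the analogous result Theorem \ref{Thm:ABCauchy}; the $\binom{\mu}{n}$ factor does not affect convergence, as was already observed in the proof of Lemma \ref{Lem:MML2conv}.
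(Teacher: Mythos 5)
Your proposal is correct and follows essentially the same route as the paper's own proof, which likewise starts from the series of Hankel integrals in Lemma \ref{Lem:IABseriesCauchy} and invokes the convergence property of Lemma \ref{Lem:MML2conv} to justify swapping summation and integration; your added detail about the argument $\tfrac{\nu}{1-\nu}(w-z)^{\nu}$ remaining in a compact set on $H$ is a welcome elaboration of a step the paper leaves implicit. (The paper also sketches a second, alternative derivation starting from \eqref{IAB:Cdef} and collapsing each term onto the Hankel contour as in Theorem \ref{Thm:ABCauchy}, but your chosen route is its primary one.)
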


\begin{proof}
On the one hand, we can use the series \eqref{IAB:seriesCauchy} from Lemma \ref{Lem:IABseriesCauchy}. The convergence property given by Lemma \ref{Lem:MML2conv} enables us to swap the summation and integration in this expression, which yields the desired result.

Alternatively, on the other hand, we can use the series \eqref{IAB:Cdef} from Lemma \ref{Lem:IABintAC}. The strategy of complex contour integration, which we used in the proof of Theorem \ref{Thm:ABCauchy}, enables us to rewrite each term in the series from \eqref{IAB:Cdef} as a Hankel contour integral, and thus we obtain a series $\sum_{n=1}^{\infty}(\dots)$. For the required $n=0$ term, we observe that
\[\left(\frac{1-\nu}{B(\nu)}\right)^{\mu}f(z)=\frac{1}{2\pi i}\left(\frac{1-\nu}{B(\nu)}\right)^{\mu}\int_{|w-z|=\epsilon}\frac{f(w)}{w-z}\,\mathrm{d}w=\frac{1}{2\pi i}\left(\frac{1-\nu}{B(\nu)}\right)^{\mu}\int_H\frac{f(w)}{w-z}\,\mathrm{d}w,\]
and thus the $f(z)$ term from \eqref{IAB:Cdef} can be incorporated into the large integral.
\end{proof}

Now we can define an analytic continuation of the iterated AB differintegral to the whole complex plane for $\nu$ (having already achieved the whole complex plane for $\mu$ in Lemma \ref{Lem:IABintAC}). It follows from Lemma \ref{Lem:MML2conv} that the formula established in Theorem \ref{Thm:IABCauchy} defines an analytic function of $\nu\in\mathbb{C}\backslash\mathbb{R}$, while we showed in Lemma \ref{Lem:IABintAC} that the equivalent formula \eqref{IAB:Cdef} is an analytic function of $\nu$ in the punctured right half-plane $\Real(\nu)>0,\nu\neq1$. Analyticity in $z$ is preserved throughout if the function $f$ itself is analytic. Therefore, we propose the following definition.

\begin{definition} \label{Def:EIAB}
Let $c$ be a fixed complex number and $f$ be a complex function which is analytic on an open star-domain $R$ centred at $c$.

The \textbf{extended iterated AB differintegral} $\prescript{IAB}{c}I^{\nu,\mu}_{z}f(z)$ is defined for any $\mu\in\mathbb{C}$, any $\nu\in\mathbb{C}\backslash(\{1\}\cup\mathbb{R}^-)$, and any $z\in R\backslash\{c\}$ by:
\begin{alignat}{2}
\label{EIAB:1} \prescript{IAB}{c}I^{\nu,\mu}_{z}f(z)&=\int_{c}^z\left(\frac{1-\nu}{B(\nu)}\right)^{\mu}f(y)\left[\delta(z-y)+\sum_{n=1}^{\infty}\frac{\binom{\mu}{n}(1-\nu)^{-n}\nu^n}{\Gamma(n\nu)}(z-y)^{n\nu-1}\right]\,\mathrm{d}y,\quad&&\;\Real(\nu)>0; \\
\label{EIAB:2} \prescript{IAB}{c}I^{\nu,\mu}_{z}f(z)&=\frac{1}{2\pi i}\left(\frac{1-\nu}{B(\nu)}\right)^{\mu}\int_H\sum_{n=0}^{\infty}\frac{\Gamma(\mu+1)\Gamma(1-n\nu)}{\Gamma(n+1)\Gamma(\mu-n+1)}\left(\frac{\nu}{1-\nu}(w-z)^{\nu}\right)^n\frac{f(w)}{w-z}\,\mathrm{d}w,\quad&&\;\nu\in\mathbb{C}\backslash\mathbb{R}.
\end{alignat}
\end{definition}

\begin{proposition}
\label{Prop:EIAB:AC}
The extended iterated AB differintegral proposed in Definition \ref{Def:EIAB} is:
\begin{enumerate}
\item an analytic function of $z\in R\backslash\{c\}$, of $\mu\in\mathbb{C}$, and of $\nu\in\mathbb{C}\backslash\mathbb{R}^-_0$, provided $f$ and $B$ are analytic;
\item identical to the original formulae \eqref{IAB:RLdef}--\eqref{IAB:deltadef} in the case when $0<\nu<1$, $\mu\in\mathbb{R}$, and $c<z$ in $\mathbb{R}$.
\end{enumerate}
Therefore, it is an \textbf{analytic continuation} of the original definition to complex values of $z$, $\mu$, and $\nu$.
\end{proposition}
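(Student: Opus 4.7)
The plan is to follow the same two-step strategy used for Propositions \ref{Prop:EAB:AC} and \ref{Prop:EABI:AC}: first, verify that each of the two defining formulae \eqref{EIAB:1} and \eqref{EIAB:2} is well-defined and analytic on its own piece of the domain; second, check that they agree on the overlap region and reduce to the original definition in the real case. Together these will give a single analytic function on $\{\Re(\nu)>0\}\cup(\mathbb{C}\backslash\mathbb{R})=\mathbb{C}\backslash\mathbb{R}^-_0$, modulo a removable-singularity check at $\nu=1$.

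For formula \eqref{EIAB:1}, I would appeal directly to Lemma \ref{Lem:IABintAC}, which already asserts analyticity entire in $\mu$ and analytic in $\nu$ on the punctured right half-plane $\{\Re(\nu)>0,\nu\neq 1\}$ when $c$ and $z$ are real. Extending this to complex $z\in R\backslash\{c\}$ amounts to noting that the integrand has an integrable endpoint singularity at $w=z$, that it depends analytically on $z$ elsewhere, and that the star-shape hypothesis on $R$ allows the straight-line segment $[c,z]$ to deform analytically as $z$ varies. Differentiation under the integral sign, justified by the locally uniform convergence of the series that was established in the proof of Lemma \ref{Lem:IABintAC}, then gives joint analyticity in $(z,\mu,\nu)$ on the relevant product domain.

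For formula \eqref{EIAB:2} I would use Lemma \ref{Lem:MML2conv} to treat the series–integral as an analytic object: joint local uniform convergence in $x,\mu,\nu$ allows termwise integration over the Hankel contour and termwise differentiation in the parameters, so the result is analytic in $\mu\in\mathbb{C}$ and in $\nu\in\mathbb{C}\backslash\mathbb{R}$. For analyticity in $z$, the key observation is that the contour $H$ in \eqref{Hankel1}--\eqref{Hankel3} is parametrised relative to $z$ and $c$, and the star-domain hypothesis on $R$ guarantees that small movements of $z$ inside $R\backslash\{c\}$ can be accompanied by a continuous deformation of $H$ that remains inside $R$ and avoids the singularities of $f$, so the integral defines an analytic function of $z$. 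The consistency of \eqref{EIAB:1} and \eqref{EIAB:2} on the overlap $\{\Re(\nu)>0\}\cap(\mathbb{C}\backslash\mathbb{R})$ is supplied by Theorem \ref{Thm:IABCauchy}, and the gluing principle for analytic functions produces a single analytic function on $\mathbb{C}\backslash\mathbb{R}^-_0$.

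The second claim, the identification with \eqref{IAB:RLdef}--\eqref{IAB:deltadef} when $0<\nu<1$, $\mu\in\mathbb{R}$, and $c<z$ in $\mathbb{R}$, is essentially tautological, since \eqref{EIAB:1} is syntactically identical to \eqref{IAB:deltadef} and the equivalence of \eqref{IAB:RLdef}, \eqref{IAB:2def}, and \eqref{IAB:deltadef} was already built into Definition \ref{Def:IAB}; the analytic-continuation conclusion then follows from uniqueness of analytic continuation. The main obstacles I foresee are twofold: first, disposing of the apparent singularity at $\nu=1$ (which I expect to be removable by a limiting argument analogous to the exponential/delta limit invoked in Lemma \ref{Lem:ABderAC}, since at $\nu=1$ the iterated AB operator formally collapses to an iterated ordinary differintegral); and second, making the contour-deformation argument for analyticity in $z$ rigorous when both $z$ and the Hankel contour $H=H(z,c,\epsilon)$ are complex and varying simultaneously.
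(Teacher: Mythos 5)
Your proposal follows essentially the same route as the paper's own proof: cite Lemma \ref{Lem:IABintAC} for analyticity of \eqref{EIAB:1} on the punctured right half-plane, Lemma \ref{Lem:MML2conv} for analyticity of \eqref{EIAB:2} off the real axis, Theorem \ref{Thm:IABCauchy} for agreement on the overlap, and the remark that analyticity in $z$ persists when $f$ is analytic. If anything you are more careful than the paper, which silently glosses over both the contour-deformation argument in $z$ and the point $\nu=1$ (excluded in Definition \ref{Def:EIAB} yet included in the claimed domain of analyticity $\mathbb{C}\backslash\mathbb{R}^-_0$), issues you correctly flag as the remaining obstacles.
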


\begin{proof}
We showed in Lemma \ref{Lem:IABintAC} that the formula \eqref{EIAB:1}, being clearly equivalent to \eqref{IAB:Cdef}, is a valid extension of the original definition \eqref{IAB:2def} from $\mu\in\mathbb{R},0<\nu<1$ to $\mu,\nu\in\mathbb{C},\Real(\nu)>0$. And the three definitions \eqref{IAB:RLdef}--\eqref{IAB:deltadef} are already known to be equivalent.

Meanwhile, it follows from Lemma \ref{Lem:MML2conv} that the formula \eqref{EIAB:2} defines an analytic function of $\mu\in\mathbb{C}$ and $\nu\in\mathbb{C}\backslash\mathbb{R}$, and Theorem \ref{Thm:IABCauchy} tells us that this formula agrees with \eqref{EIAB:1} on the region $\Real(\nu)>0,\nu\not\in\mathbb{R}$.

The above results were proved for real $z$, but analyticity in $z\in R\backslash\{c\}$ is preserved throughout if the function $f$ itself is analytic.
\end{proof}

\begin{remark} \label{Rem:IABnodelta}
Once again, the advantage of the complex-analytic approach over the original definition is that it enables us to use a simpler kernel function.

The formula \eqref{EIAB:1} which was used in the real case requires taking the convolution of $f$ with a kernel which is not actually a function but rather a distribution, written in terms of the Dirac delta. The only alternative is the equivalent of using \eqref{IAB:2def} instead of \eqref{IAB:deltadef}: moving the first term outside of the integral, thus avoiding the use of the Dirac delta but requiring the definition to be a linear combination of two expressions rather than a single integral. In either case, the $n=0$ term must be treated separately.

By contrast, the Cauchy-type formula \eqref{EIAB:2} which can be used in the complex case is expressed as the convolution of $f$ with a single well-defined kernel function. As before (Remark \ref{Rem:ABnodelta}), the reason is that in complex analysis any function $f(z)$ can be written as a convolution of itself with $\frac{1}{z}$, whereas in real analysis the delta function must be invoked to achieve the same effect.
\end{remark}

\section{Properties and relations of the extended operators} \label{Sec:consequences}

Armed with the definitions that were established in the previous section, we can now prove some important facts about the extended versions of the AB and iterated AB operators.

\begin{theorem}[Inversion relations]
\label{Thm:EABcompos}
Let $c$ be a fixed complex number and $f$ be a complex function which is analytic on an open star-domain $R$ centred at $c$. For any $\nu\in\mathbb{C}\backslash\mathbb{R}^-$ and any $z\in R\backslash\{c\}$, we have the following composition relations between the extended AB derivatives and integrals:
\begin{align}
\label{EABcompos1} \prescript{ABR}{c}D^{\nu}_z\Big(\prescript{AB}{c}I^{\nu}_zf(z)\Big)&=f(z); \\
\label{EABcompos2} \prescript{AB}{c}I^{\nu}_z\Big(\prescript{ABR}{c}D^{\nu}_zf(z)\Big)&=f(z); \\
\label{EABcompos3} \prescript{AB}{c}I^{\nu}_z\Big(\prescript{ABC}{c}D^{\nu}_zf(z)\Big)&=f(z)-f(c).
\end{align}
However, the semigroup property for extended AB differintegrals does not hold in general. For $\mu,\nu\in\mathbb{C}\backslash\mathbb{R}^-$:
\begin{align*}
\prescript{AB}{c}I^{\mu}_z\Big(\prescript{AB}{c}I^{\nu}_zf(z)\Big)&\neq\prescript{AB}{c}I^{\mu+\nu}_zf(z); \\
\prescript{ABR}{c}D^{\mu}_z\Big(\prescript{ABR}{c}D^{\nu}_zf(z)\Big)&\neq\prescript{ABR}{c}D^{\mu+\nu}_zf(z); \\
\prescript{ABC}{c}D^{\mu}_z\Big(\prescript{ABC}{c}D^{\nu}_zf(z)\Big)&\neq\prescript{ABC}{c}D^{\mu+\nu}_zf(z).
\end{align*}
\end{theorem}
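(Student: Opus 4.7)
The plan is to prove both halves of the theorem by analytic continuation, piggybacking on the corresponding results in Propositions \ref{Prop:ABcompos} and the analyticity of the extended operators established in Propositions \ref{Prop:EAB:AC} and \ref{Prop:EABI:AC}.

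First I would handle the inversion identities \eqref{EABcompos1}--\eqref{EABcompos3}. For each, I would argue that both sides are well-defined analytic functions of $\nu$ on the connected domain $\mathbb{C}\setminus\mathbb{R}^-_0$. The right-hand sides $f(z)$ and $f(z)-f(c)$ are trivially constant in $\nu$. For the left-hand sides, the key observation is that the inner operator applied to $f$ produces a function of $z$ that is itself analytic on the star-domain $R\setminus\{c\}$: for the AB integral this is immediate from Proposition \ref{Prop:EABI:AC}, and for the ABR/ABC derivatives it follows from the Cauchy-type representations \eqref{EABR:2} and \eqref{EABC:2}, whose $z$-derivative with respect to $z$ yields an analytic function of $z$ (since we may differentiate under the Hankel integral). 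Therefore the outer operator can legitimately be applied, and the composition is again analytic in both $z$ and $\nu$ on the same domain. Now Proposition \ref{Prop:ABcompos} guarantees that the two sides agree on the real interval $\nu\in(0,1)$ for $c<z$ in $\mathbb{R}$; since this set has accumulation points in $\mathbb{C}\setminus\mathbb{R}^-_0$ and in $R\setminus\{c\}$, the identity theorem for analytic functions forces equality throughout the extended domain.

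For the failure of the semigroup property, no new work is needed: the extended operators, when restricted to $0<\mu,\nu<1$ and $c<z$ in $\mathbb{R}$, reduce by construction to the original AB operators (Propositions \ref{Prop:EAB:AC} and \ref{Prop:EABI:AC}). Proposition \ref{Prop:ABcompos} already exhibits values of $\mu$, $\nu$, and $f$ for which each of the three semigroup identities fails in the real case; this same data is admissible in the extended setting and provides an immediate counterexample, so the non-identity persists under the complex extension.

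The main obstacle is the first one — justifying that each composition on the left-hand side of \eqref{EABcompos1}--\eqref{EABcompos3} is genuinely analytic on the punctured domain claimed. The composition involves an inner operator whose output has a potential singularity at $z=c$, and one must check that the outer operator, together with the differentiation under the Hankel integral in \eqref{EABR:2}, is applicable to such functions and produces an analytic dependence on $\nu$ (including near, though not at, the real axis for the Cauchy representation, and on $\operatorname{Re}(\nu)>0$ for the real-integral representation). Once continuity of both representations across their overlap region $\operatorname{Re}(\nu)>0, \nu\notin\mathbb{R}$ is invoked (as in Definition \ref{Def:EABder}), the identity theorem closes the argument.
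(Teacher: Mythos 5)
Your proposal follows essentially the same route as the paper: the identities \eqref{EABcompos1}--\eqref{EABcompos3} are obtained by analytic continuation (via the identity theorem) from the real-order results of Proposition \ref{Prop:ABcompos}, using the analyticity in $\nu$ guaranteed by Propositions \ref{Prop:EAB:AC} and \ref{Prop:EABI:AC}. Your additional care in checking that the compositions are themselves analytic, and your observation that real counterexamples to the semigroup property persist in the extended setting, only make explicit what the paper's one-line proof leaves implicit.
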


\begin{proof}
By Proposition \ref{Prop:EAB:AC} and Proposition \ref{Prop:EABI:AC}, the identities \eqref{EABcompos1}--\eqref{EABcompos3} follow directly by analytic continuation from the original and analogous results \eqref{ABcompos1}--\eqref{ABcompos3}, which are known to be true by Proposition \ref{Prop:ABcompos}.
\end{proof}

\begin{theorem}[Commutativity relations]
\label{Thm:EABcommut}
Let $c$ be a fixed complex number and $f$ be a complex function which is analytic on an open star-domain $R$ centred at $c$. For any $\mu,\nu\in\mathbb{C}$ such that $\mu,\nu,\mu+\nu\not\in\mathbb{R}^-$, and for any $z\in R\backslash\{c\}$, we have the following composition relations between the extended ABR derivatives and extended AB integrals:
\begin{align}
\label{EABcommut1} \prescript{AB}{c}I^{\mu}_z\Big(\prescript{AB}{c}I^{\nu}_zf(z)\Big)&=\prescript{AB}{c}I^{\nu}_z\Big(\prescript{AB}{c}I^{\mu}_zf(z)\Big); \\
\label{EABcommut2} \prescript{ABR}{c}D^{\mu}_z\Big(\prescript{ABR}{c}D^{\nu}_zf(z)\Big)&=\prescript{ABR}{c}D^{\nu}_z\Big(\prescript{ABR}{c}D^{\mu}_zf(z)\Big); \\
\label{EABcommut3} \prescript{ABR}{c}D^{\nu}_z\Big(\prescript{AB}{c}I^{\mu}_zf(z)\Big)&=\prescript{AB}{c}I^{\mu}_z\Big(\prescript{ABR}{c}D^{\nu}_zf(z)\Big).
\end{align}
\end{theorem}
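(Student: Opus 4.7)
The plan is to deduce this theorem from the corresponding real-variable result, Proposition \ref{Prop:ABcommut}, by analytic continuation, exactly as was done for the inversion relations in Theorem \ref{Thm:EABcompos}. By Proposition \ref{Prop:ABcommut}, each of the three identities \eqref{EABcommut1}--\eqref{EABcommut3} is known to hold for real orders $0<\mu,\nu<1$, so in particular they hold on the open real rectangle $(0,1)^2\subset\mathbb{C}^2$.

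Next I would verify that, for fixed $c$, fixed $z\in R\backslash\{c\}$, and fixed analytic $f$, both sides of each identity are holomorphic functions of the pair $(\mu,\nu)$ on a connected open subset of $\mathbb{C}^2$ containing $(0,1)^2$. The single-operator building blocks are already handled: Proposition \ref{Prop:EABI:AC} says that $\prescript{AB}{c}I^{\nu}_zf(z)$ is holomorphic in $\nu\in\mathbb{C}$ (where $B$ is nonzero) and in $z\in R\backslash\{c\}$, and Proposition \ref{Prop:EAB:AC} gives the analogous statement for $\prescript{ABR}{c}D^{\nu}_zf(z)$ for $\nu\in\mathbb{C}\backslash\mathbb{R}^-_0$. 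The key point is that the output of an inner operator is again an analytic function of $z$ on $R\backslash\{c\}$ (the RL integral of an analytic function is analytic, and similarly for the ABR derivative by \eqref{EABR:1}--\eqref{EABR:2}), so the outer operator may legitimately act on it, yielding a composition that is jointly holomorphic in $(\mu,\nu)$ on the domain
\[
D\definedby\{(\mu,\nu)\in\mathbb{C}^2:\mu,\nu,\mu+\nu\not\in\mathbb{R}^-\},
\]
which is connected and contains the open real rectangle $(0,1)^2$.

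The conclusion then follows from the identity theorem for holomorphic functions of several complex variables: two holomorphic functions on a connected domain $D\subset\mathbb{C}^2$ that agree on a subset with non-empty interior (here $(0,1)^2$) must agree on all of $D$. Applying this separately to each of the three identities gives \eqref{EABcommut1}, \eqref{EABcommut2}, and \eqref{EABcommut3} on $D$, which is precisely the range of $(\mu,\nu)$ stated in the theorem.

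The main obstacle I expect is bookkeeping rather than any deep new idea: one must track carefully the domains on which each individual operator is analytic (avoiding $\mathbb{R}^-_0$ for the derivatives, zeros of $B$ for the integrals), and check that the \emph{composition} remains holomorphic in $(\mu,\nu)$ jointly — in particular that differentiating an analytic function of $z$ and of a parameter commutes, and that the singular behaviour of the inner output at $z=c$ (inherited from the Hankel-contour representation) is compatible with the outer operator being applied. The condition $\mu+\nu\not\in\mathbb{R}^-$ in the hypothesis is what keeps $D$ a single connected component on which no branch issue of the composed operator arises, and recording this is the only slightly delicate point; once it is in place, the identity principle does all of the remaining work.
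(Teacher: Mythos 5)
Your proposal is correct and follows essentially the same route as the paper: the paper's proof is a one-line appeal to analytic continuation from Proposition \ref{Prop:ABcommut}, justified by the analyticity statements in Propositions \ref{Prop:EAB:AC} and \ref{Prop:EABI:AC}. You simply make explicit the details (joint holomorphy of the compositions in $(\mu,\nu)$, connectedness of the domain, and the several-variable identity theorem) that the paper leaves implicit.
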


\begin{proof}
By Proposition \ref{Prop:EAB:AC} and Proposition \ref{Prop:EABI:AC}, the identities \eqref{EABcommut1}--\eqref{EABcommut3} follow directly by analytic continuation from the original and analogous results \eqref{ABcommut1}--\eqref{ABcommut3}, which are known to be true by Proposition \ref{Prop:ABcommut}.
\end{proof}

\begin{theorem}
\label{Thm:EIAB:EAB}
Let $c$ be a fixed complex number and $f$ be a complex function which is analytic on an open star-domain $R$ centred at $c$. For any $\mu,\nu\in\mathbb{C}$ such that $\nu\not\in\mathbb{R}^-$ and $\nu\neq1$, for any $z\in R\backslash\{c\}$, and for any $n\in\mathbb{N}$, we have the following relations between the extended iterated AB differintegral, the extended AB integral, and the extended ABR derivative:
\begin{align*}
\prescript{IAB}{c}I^{0,\mu}_{z}f(z)&=f(z)=\prescript{IAB}{c}I^{\nu,0}_{z}f(z); \\
\prescript{IAB}{c}I^{\nu,n}_{z}f(z)&=\Big(\prescript{AB}{c}I^{\nu}_z\Big)^nf(z); \\
\prescript{IAB}{c}I^{\nu,-n}_{z}f(z)&=\Big(\prescript{ABR}{c}I^{\nu}_z\Big)^nf(z).
\end{align*}
\end{theorem}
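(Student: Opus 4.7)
The plan is to reduce all three identities to established facts about the iterated AB model (for real parameters) and then invoke the analytic continuation principle guaranteed by Proposition \ref{Prop:EIAB:AC} and Proposition \ref{Prop:EABI:AC}. Since the extended operators are analytic in all of $z$, $\mu$, $\nu$ on the stated domains, and the relevant domains are connected, it suffices in each case to verify the identity either on a real sub-domain or via a direct computation from one of the equivalent series representations.

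First I would dispose of the ``initial conditions'' $\prescript{IAB}{c}I^{0,\mu}_{z}f(z)=f(z)=\prescript{IAB}{c}I^{\nu,0}_{z}f(z)$ by direct substitution into the series formula \eqref{IAB:RLdef}, which remains valid for complex parameters by Remark \ref{Rem:IABserAC}. Setting $\mu=0$ gives $\binom{0}{n}=\delta_{n,0}$, so only the $n=0$ summand survives and yields $\prescript{RL}{c}I^{0}_zf(z)=f(z)$. Setting $\nu=0$ (understood as the appropriate limit, since $\nu=0$ lies on the boundary of the domain in Definition \ref{Def:EIAB}) kills every $n\geq 1$ term through the factor $\nu^n$, leaving $B(0)^{-\mu}f(z)=f(z)$ under the standard normalization $B(0)=1$ inherited from \cite{atangana-baleanu}.

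Next, for the identity $\prescript{IAB}{c}I^{\nu,n}_{z}f(z)=\bigl(\prescript{AB}{c}I^{\nu}_z\bigr)^nf(z)$ with $n\in\mathbb{N}$, I would point out that this is essentially the defining property of the iterated AB model as originally constructed in \cite{fernandez-baleanu2} for $0<\nu<1$ and $n\in\mathbb{N}$. The left-hand side is analytic in $\nu$ on $\mathbb{C}\backslash(\mathbb{R}^-_0\cup\{1\})$ by Proposition \ref{Prop:EIAB:AC}, while the right-hand side is an $n$-fold composition of operators, each analytic in $\nu\in\mathbb{C}\backslash\{0\}$ by Proposition \ref{Prop:EABI:AC}. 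Since the two agree on the real interval $(0,1)$, the identity theorem for analytic functions propagates the equality to the entire common domain.

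Finally, for $\prescript{IAB}{c}I^{\nu,-n}_{z}f(z)=\bigl(\prescript{ABR}{c}D^{\nu}_z\bigr)^nf(z)$, the starting point is the observation (recorded immediately after Definition \ref{Def:IAB}) that the case $\mu=-1$ recovers the ABR derivative: $\prescript{IAB}{c}I^{\nu,-1}_{z}f(z)=\prescript{ABR}{c}D^{\nu}_zf(z)$. Iterating via the semigroup property in $\mu$ (Proposition \ref{Prop:IABsemigroup}), which holds for all $\mu,\rho\in\mathbb{R}$ and extends to complex $\mu$ by the same analytic continuation argument since both sides of the semigroup identity are entire in $\mu$, we obtain the required chain
\[\prescript{IAB}{c}I^{\nu,-n}_{z}f(z)=\bigl(\prescript{IAB}{c}I^{\nu,-1}_{z}\bigr)^nf(z)=\bigl(\prescript{ABR}{c}D^{\nu}_z\bigr)^nf(z),\]
and then extend in $\nu$ to $\mathbb{C}\backslash(\mathbb{R}^-_0\cup\{1\})$ by analyticity as before. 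The main subtlety will be ensuring that the intermediate expressions $\bigl(\prescript{AB}{c}I^{\nu}_z\bigr)^k f$ and $\bigl(\prescript{ABR}{c}D^{\nu}_z\bigr)^k f$ stay within the class of functions analytic on a neighbourhood of $[c,z]$ at each iteration, so that repeated application of the operators (and of Proposition \ref{Prop:EIAB:AC}) is legitimate; this follows from the fact that extended AB operators preserve analyticity in $z$ on $R\backslash\{c\}$, as noted in both Proposition \ref{Prop:EAB:AC} and Proposition \ref{Prop:EABI:AC}.
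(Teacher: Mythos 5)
Your proposal is correct and follows essentially the same route as the paper: the identities are known for the original real-parameter operators from \cite{fernandez-baleanu2}, and Propositions \ref{Prop:EABI:AC}, \ref{Prop:EAB:AC} and \ref{Prop:EIAB:AC} let you transport them to the extended operators by analytic continuation. The extra detail you supply (the series verification of the $\mu=0$ and $\nu=0$ cases, the semigroup argument reducing $\mu=-n$ to $\mu=-1$, and the remark about preserving analyticity under iteration) is a sound elaboration of the same argument rather than a different method.
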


\begin{proof}
All of these are proved in \cite{fernandez-baleanu2} for the original AB and iterated AB operators. By Proposition \ref{Prop:EAB:AC} and Proposition \ref{Prop:EABI:AC} and Proposition \ref{Prop:EIAB:AC}, the identities stated here for the extended operators follow directly by analytic continuation.
\end{proof}

\begin{theorem}[Semigroup property]
\label{Thm:EIABsemigroup}
Let $c$ be a fixed complex number and $f$ be a complex function which is analytic on an open star-domain $R$ centred at $c$. For any $\mu,\nu,\rho\in\mathbb{C}$ such that $\nu\not\in\mathbb{R}^-$ and $\nu\neq1$, and for any $z\in R\backslash\{c\}$, we have the following semigroup property in the second variable for extended iterated AB differintegrals:
\[\prescript{IAB}{c}I^{\nu,\mu}_{z}\Big(\prescript{IAB}{c}I^{\nu,\rho}_{z}f(z)\Big)=\prescript{IAB}{c}I^{\nu,\mu+\rho}_{z}f(z).\]
\end{theorem}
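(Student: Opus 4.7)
The plan is to prove this by analytic continuation from the already established real case, exactly as was done for Theorems \ref{Thm:EABcompos} and \ref{Thm:EABcommut}. The original semigroup identity is known by Proposition \ref{Prop:IABsemigroup} for all real $\mu,\rho$ and all real $\nu\in(0,1)$. Since both sides of the proposed equation are expected to be analytic functions of the three complex parameters $(\mu,\nu,\rho)$ on a connected domain that contains this real slice as a set with an accumulation point, the identity theorem will extend the equality from the real slice to the full complex domain.

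Concretely, I would proceed in three steps. First, I would verify that for $\mu,\rho\in\mathbb{C}$ and $\nu\in\mathbb{C}\backslash(\{1\}\cup\mathbb{R}^-)$, both sides of the identity are well-defined under the hypothesis that $f$ is analytic on the star-domain $R$. Proposition \ref{Prop:EIAB:AC} tells us that $g(z)\definedby\prescript{IAB}{c}I^{\nu,\rho}_{z}f(z)$ is analytic in $z\in R\backslash\{c\}$ and analytic in $\rho,\nu$ on the stated domain, so applying the outer operator $\prescript{IAB}{c}I^{\nu,\mu}_{z}$ to $g$ is legitimate. Second, I would establish that both sides are jointly analytic in $(\mu,\nu,\rho)$ on the domain $\mathbb{C}\times(\mathbb{C}\backslash(\{1\}\cup\mathbb{R}^-_0))\times\mathbb{C}$. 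For the right-hand side this is immediate from Proposition \ref{Prop:EIAB:AC}. For the left-hand side, I would use the Cauchy-type representation \eqref{EIAB:2}, differentiating under the integral and applying Morera's theorem (or standard uniform-convergence arguments on compact subsets of the parameter domain) to deduce analyticity in $\mu$ and $\nu$ from the outer operator, and in $\rho$ from the analytic dependence of $g$ inside the integrand. Third, I would invoke the Osgood/Hartogs form of the identity theorem in several complex variables: the two holomorphic functions agree on the real three-dimensional subset $\{(\mu,\nu,\rho):\mu,\rho\in\mathbb{R},\;\nu\in(0,1)\}$, and this subset has an accumulation point in each complex direction inside the connected parameter domain, so equality extends everywhere.

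The main obstacle I anticipate is the second step, namely establishing joint analyticity of the composed operator in $(\mu,\nu,\rho)$ rigorously. The inner operator produces a function of $z$ whose behaviour at the endpoint $z=c$ and along the Hankel contour used by the outer operator must be controlled uniformly in the parameters. In particular, one must check that the singular factor $(w-z)^{-1}$ combined with the modified double Mittag-Leffler kernel from \eqref{MML2} yields an integrand to which differentiation under the integral sign in $\mu$, $\nu$, and $\rho$ can be justified simultaneously. This is essentially a uniform-convergence and dominated-convergence check on compact subsets of the parameter domain, relying on Lemma \ref{Lem:MML2conv} for the exponential decay of the series terms when $\nu\not\in\mathbb{R}$; once this is in hand, the extension to $\nu\in\mathbb{C}\backslash\mathbb{R}^-_0$ is automatic by the previous analytic continuations. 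Everything else in the proof is a direct appeal to prior results in the paper.
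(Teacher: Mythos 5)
Your proposal is correct and takes essentially the same route as the paper: the paper's entire proof is a one-line appeal to analytic continuation from Proposition \ref{Prop:IABsemigroup} via the analyticity statement of Proposition \ref{Prop:EIAB:AC}. Your version is in fact more careful than the paper's, since you explicitly flag the one genuinely nontrivial point --- joint analyticity of the \emph{composed} operator in $(\mu,\nu,\rho)$, which does not follow formally from analyticity of each single operator and which the paper simply asserts without comment.
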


\begin{proof}
By Proposition \ref{Prop:EIAB:AC}, this identity follows directly by analytic continuation from the analogous result for the original iterated AB differintegral, which is established in Proposition \ref{Prop:IABsemigroup}.
\end{proof}

In a similar manner, many other properties of the AB and iterated AB models of fractional calculus can be straightforwardly extended to the complex-analytic extensions of those models which are defined in the current work. Essentially, what we have called the extended AB model and the extended iterated AB model are the natural way to define AB derivatives, AB integrals, and iterated AB differintegrals to complex orders of differintegration.

\begin{remark}
It is important to note that the AB derivatives and the AB integral are \textbf{not} two ways of defining the same operator with different signs.

In the case of the Riemann--Liouville derivative and integral, writing $\prescript{RL}{c}D^{\nu}_xf(x)$ and $\prescript{RL}{c}I^{-\nu}_xf(x)$ are equivalent. The analytic continuation of the Riemann--Liouville integral to negative values of $\nu$ yields precisely what we call the Riemann--Liouville derivative, its inverse operator.

In the Atangana--Baleanu model, this is not the case. The ABR derivative and the AB integral have fundamentally different structures from each other, their only relation being that they are inverse operators. This was not apparent in the original literature on the AB operators, which only treated $\prescript{ABR}{c}D^{\nu}_xf(x)$ and $\prescript{AB}{c}I^{\nu}_xf(x)$ in the case of $0<\nu<1$. But now that we have the analytic continuations of these operators to all values of $\nu$, we can clearly see that $\prescript{AB}{c}I^{-\nu}_zf(z)$ is not equal to either $\prescript{ABR}{c}D^{\nu}_zf(z)$ or $\prescript{ABC}{c}D^{\nu}_zf(z)$.

One way to unify the ABR derivative and the AB integral into a single generalised operator is given by the iterated AB differintegral (Definition \ref{Def:IAB}), specifically the fact that putting $\mu=1$ gives the AB integral and putting $\mu=-1$ gives the ABR derivative. However, this unification requires a second parameter of differintegration. It is not possible to recover the ABR derivative simply by negating the order of the AB integral.

We note also that this issue relates to the issue of inversion of differintegral operators which was discussed in a generalised context in \cite{fernandez-ozarslan-baleanu}.
\end{remark}

Finally, we validate the extended definitions for AB integrals and derivatives by verifying how they apply to some elementary functions, namely generalised power functions and exponential functions. These are often the first functions to check for a new fractional differintegral operator.

\begin{proposition}
The extended AB integral and extended AB derivatives of a general power function are as follows:
\begin{align*}
\prescript{AB}{c}I^{\nu}_z\left((z-c)^{\alpha}\right)&=\frac{(z-c)^{\alpha}}{B(\nu)}\left(1-\nu+\nu(z-c)^{\nu}\cdot\frac{\Gamma(\alpha+1)}{\Gamma(\alpha+\nu+1)}\right); \\
\prescript{ABR}{c}D^{\nu}_z\left((z-c)^{\alpha}\right)&=\prescript{ABC}{c}D^{\nu}_z\left((z-c)^{\alpha}\right)=\frac{B(\nu)}{1-\nu}\cdot(z-c)^{\alpha}\Gamma(\alpha+1)E_{\nu,\alpha+1}\left(\frac{-\nu}{1-\nu}(z-c)^{\nu}\right),
\end{align*}
where the parameters $\alpha,\nu\in\mathbb{C}$ satisfy $\Real(\alpha)>-1$ and, for the extended ABR derivatives, $\nu\not\in\mathbb{R}^-$.
\end{proposition}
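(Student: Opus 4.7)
The plan is to reduce each of the three computations to the well-known Riemann--Liouville power formula
\[\prescript{RL}{c}I^{\beta}_z\bigl((z-c)^{\gamma}\bigr) = \frac{\Gamma(\gamma+1)}{\Gamma(\gamma+\beta+1)}(z-c)^{\gamma+\beta},\]
valid for $\Real(\gamma)>-1$, using in each case the representation of the AB operator best suited to termwise evaluation. For the AB integral this is immediate: substituting $f(z)=(z-c)^{\alpha}$ into the defining linear combination \eqref{EABint} and applying the formula above with $\beta=\nu$, $\gamma=\alpha$, produces exactly the bracket $\bigl(1-\nu+\nu(z-c)^{\nu}\Gamma(\alpha+1)/\Gamma(\alpha+\nu+1)\bigr)$ after factoring out $(z-c)^{\alpha}/B(\nu)$.

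For the ABR derivative I would invoke the series expansion \eqref{ABR:series}, whose validity for general complex $\nu$ with $\Real(\nu)>0$ is guaranteed by Lemma \ref{Lem:Cseries}. Applying the power formula termwise and factoring $(z-c)^{\alpha}\Gamma(\alpha+1)$ out of the sum, the remainder
\[\sum_{n=0}^{\infty}\frac{1}{\Gamma(\alpha+n\nu+1)}\left(\frac{-\nu}{1-\nu}(z-c)^{\nu}\right)^{n}\]
is literally the two-parameter Mittag-Leffler function $E_{\nu,\alpha+1}$ evaluated at $\frac{-\nu}{1-\nu}(z-c)^{\nu}$, producing the stated expression. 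The ABC case is handled identically using the companion series \eqref{ABC:series}: computing $f'(y)=\alpha(y-c)^{\alpha-1}$ and shifting the RL exponent from $n\nu$ to $n\nu+1$ gives a typical term proportional to $\alpha\Gamma(\alpha)/\Gamma(\alpha+n\nu+1)$, and the gamma identity $\alpha\Gamma(\alpha)=\Gamma(\alpha+1)$ collapses this onto the corresponding ABR series term, which simultaneously establishes the ABC formula and the equality of the two derivatives.

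The main technical point, which I would need to address carefully, is the range of parameters. The termwise ABC argument formally requires $\Real(\alpha)>0$ so that $f'$ is locally integrable at $c$, but both sides of the claimed identity are meromorphic in $\alpha$ on the half-plane $\Real(\alpha)>-1$, so the identity extends by analytic continuation in $\alpha$ to the full stated range. Likewise, the extension from $\nu\in(0,1)$ to the complex domains permitted by Definitions \ref{Def:EABder} and \ref{Def:EABint} is supplied by Propositions \ref{Prop:EAB:AC} and \ref{Prop:EABI:AC}: both sides of each formula are analytic in $\nu$ wherever defined, so the identity established on the real slice propagates throughout. I expect no real obstacle beyond this bookkeeping, since the entire argument rests on well-established power-function behaviour of the RL integral and on recognising a standard two-parameter Mittag-Leffler series.
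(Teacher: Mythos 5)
Your proposal is correct and follows essentially the same route as the paper: direct substitution of the Riemann--Liouville power formula into the defining linear combination for the AB integral, termwise application of that formula in the series representations \eqref{ABR:series}--\eqref{ABC:series} (justified for $\Real(\nu)>0$ by Lemma \ref{Lem:Cseries}) with the identity $\alpha\Gamma(\alpha)=\Gamma(\alpha+1)$ collapsing the ABC series onto the ABR one, and a final analytic continuation in $\nu$ via Proposition \ref{Prop:EAB:AC}. Your extra remark about continuing in $\alpha$ from $\Real(\alpha)>0$ to $\Real(\alpha)>-1$ in the ABC case is a sensible refinement that the paper's proof passes over silently.
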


\begin{proof}
The result for extended AB integrals follows directly from the definition \eqref{EABint}, since the Riemann--Liouville differintegral of a power function is well known. For $\Real(\alpha)>-1$ and for all $\nu\in\mathbb{C}$, we have
\begin{equation}
\label{RLpower}
\prescript{RL}{c}I^{\nu}_z\big((z-c)^{\alpha}\big)=\frac{\Gamma(\alpha+1)}{\Gamma(\alpha+\nu+1)}(z-c)^{\alpha+\nu},
\end{equation}
and therefore, substituting \eqref{RLpower} into \eqref{EABint}, we have
\[\prescript{AB}{c}I^{\nu}_z\big((z-c)^{\alpha}\big)=\frac{1-\nu}{B(\nu)}(z-c)^{\alpha}+\frac{\nu}{B(\nu)}\prescript{RL}{c}I^{\nu}_z\big((z-c)^{\alpha}\big)=\frac{1-\nu}{B(\nu)}(z-c)^{\alpha}+\frac{\nu}{B(\nu)}\cdot\frac{\Gamma(\alpha+1)}{\Gamma(\alpha+\nu+1)}(z-c)^{\alpha+\nu},\]
which yields the desired result for extended AB integrals.

For the extended AB derivatives, we first assume $\Real(\nu)>0$ and use the series formulae from Proposition \ref{Prop:ABseries}, which are known to be valid on the domain $\Real(\nu)>0$ by Lemma \ref{Lem:Cseries}. We also use \eqref{RLpower} again for the Riemann--Liouville differintegrals appearing in each term of the series.
\begin{align*}
\prescript{ABR}{c}D^{\nu}_z\big((z-c)^{\alpha}\big)&=\frac{B(\nu)}{1-\nu}\sum_{n=0}^{\infty}\big(\tfrac{-\nu}{1-\nu}\big)^n\prescript{RL}{c}I^{n\nu}_z\big((z-c)^{\alpha}\big) \\
&=\frac{B(\nu)}{1-\nu}\sum_{n=0}^{\infty}\big(\tfrac{-\nu}{1-\nu}\big)^n\frac{\Gamma(\alpha+1)}{\Gamma(\alpha+n\nu+1)}(z-c)^{\alpha+n\nu} \\
&=\frac{B(\nu)}{1-\nu}\cdot(z-c)^{\alpha}\Gamma(\alpha+1)\sum_{n=0}^{\infty}\frac{\left[\tfrac{-\nu}{1-\nu}(z-c)^{\nu}\right]^n}{\Gamma(n\nu+\alpha+1)} \\
&=\frac{B(\nu)}{1-\nu}\cdot(z-c)^{\alpha}\Gamma(\alpha+1)E_{\nu,\alpha+1}\left(\tfrac{-\nu}{1-\nu}(z-c)^{\nu}\right).
\end{align*}
This shows the desired result for the ABR case. In the ABC case, we obtain exactly the same series:
\begin{align*}
\prescript{ABC}{c}D^{\nu}_x\big((z-c)^{\alpha}\big)&=\frac{B(\nu)}{1-\nu}\sum_{n=0}^{\infty}\big(\tfrac{-\nu}{1-\nu}\big)^n\prescript{RL}{c}I^{n\nu+1}_z\frac{\mathrm{d}}{\mathrm{d}z}\big((z-c)^{\alpha}\big) \\
&=\frac{B(\nu)}{1-\nu}\sum_{n=0}^{\infty}\big(\tfrac{-\nu}{1-\nu}\big)^n\alpha\prescript{RL}{c}I^{n\nu+1}_z\big((z-c)^{\alpha-1}\big) \\
&=\frac{B(\nu)}{1-\nu}\sum_{n=0}^{\infty}\big(\tfrac{-\nu}{1-\nu}\big)^n\alpha\frac{\Gamma(\alpha)}{\Gamma(\alpha+n\nu+1)}(z-c)^{\alpha-1+n\nu+1} \\
&=\frac{B(\nu)}{1-\nu}\sum_{n=0}^{\infty}\big(\tfrac{-\nu}{1-\nu}\big)^n\frac{\Gamma(\alpha+1)}{\Gamma(\alpha+n\nu+1)}(z-c)^{\alpha+n\nu}.
\end{align*}
This is the same as the series found above for the ABR case, so the final result will be the same too.

Now we have proved the desired result for extended ABR and ABC derivatives, under the assumption that $\Real(\nu)>0$. The general case follows by analytic continuation, making use of Proposition \ref{Prop:EAB:AC}.
\end{proof}

\begin{proposition}
The extended AB integral and extended AB derivatives of an exponential function are as follows:
\begin{align*}
\prescript{AB}{-\infty}I^{\nu}_z\left(e^{\alpha z}\right)&=\frac{e^{\alpha z}}{B(\nu)}\left(1-\nu+\nu\alpha^{-\nu}\right); \\
\prescript{ABR}{-\infty}D^{\nu}_z\left(e^{\alpha z}\right)&=\prescript{ABC}{-\infty}D^{\nu}_z\left(e^{\alpha z}\right)=\frac{B(\nu)e^{\alpha z}}{1-\nu+\nu\alpha^{-\nu}},
\end{align*}
where, for the extended ABR derivatives, the parameters $\alpha,\nu\in\mathbb{C}$ satisfy $1-\nu+\nu\alpha^{-\nu}\neq0$, $\alpha\neq0$, and $\nu\not\in\mathbb{R}^-$.
\end{proposition}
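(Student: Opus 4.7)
My plan is to mirror the strategy used in the preceding proposition for power functions, exploiting the well-known Liouville/Weyl-type identity
\[\prescript{RL}{-\infty}I^{\nu}_z\!\left(e^{\alpha z}\right)=\alpha^{-\nu}e^{\alpha z},\]
which holds for suitable $\alpha,\nu\in\mathbb{C}$ (e.g.\ $\Real(\alpha)>0$ with the principal branch of $\alpha^{-\nu}$, and extended elsewhere by analytic continuation). The first identity is then immediate: substituting this into the definition \eqref{EABint} of the extended AB integral gives
\[\prescript{AB}{-\infty}I^{\nu}_z\!\left(e^{\alpha z}\right)=\frac{1-\nu}{B(\nu)}e^{\alpha z}+\frac{\nu}{B(\nu)}\alpha^{-\nu}e^{\alpha z}=\frac{e^{\alpha z}}{B(\nu)}\bigl(1-\nu+\nu\alpha^{-\nu}\bigr),\]
as required.

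For the extended ABR derivative, I would first restrict to $\Real(\nu)>0$ and invoke the series formula \eqref{ABR:series} (valid in this region by Lemma \ref{Lem:Cseries}), so that each term $\prescript{RL}{-\infty}I^{n\nu}_z(e^{\alpha z})=\alpha^{-n\nu}e^{\alpha z}$ collapses and the whole expression becomes a geometric series:
\[\prescript{ABR}{-\infty}D^{\nu}_z\!\left(e^{\alpha z}\right)=\frac{B(\nu)e^{\alpha z}}{1-\nu}\sum_{n=0}^{\infty}\left(\frac{-\nu\alpha^{-\nu}}{1-\nu}\right)^{\!n}=\frac{B(\nu)e^{\alpha z}}{1-\nu+\nu\alpha^{-\nu}},\]
where the last step uses the sum of a geometric progression and $\alpha\neq 0$ ensures the terms are well-defined. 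For the ABC case, differentiating first gives a factor $\alpha$ and raises the RL order by $1$, producing $\alpha\cdot\alpha^{-n\nu-1}=\alpha^{-n\nu}$ inside each term, so the same series and hence the same closed form emerges. The identity of the two derivatives therefore falls out for free, without needing a separate computation.

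The only subtle step is that the geometric series genuinely converges only when $|\nu\alpha^{-\nu}/(1-\nu)|<1$, whereas the closed-form right-hand side is analytic in $(\alpha,\nu)$ throughout the larger region $\{1-\nu+\nu\alpha^{-\nu}\neq 0,\;\alpha\neq 0,\;\nu\notin\mathbb{R}^-\}$. I would resolve this exactly as in the preceding proof: establish the identity first on the open subregion where the series converges absolutely, then extend to the full stated domain by invoking Proposition \ref{Prop:EAB:AC}, since both sides are analytic functions of $\nu$ on a connected open set that contains the region of convergence. This analytic-continuation step is the main conceptual point to handle carefully; the algebraic manipulations themselves are essentially a one-line geometric sum, so I expect no real obstacle beyond bookkeeping the branch of $\alpha^{-\nu}$ and the excluded loci where the denominator vanishes.
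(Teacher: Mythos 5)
Your proposal is correct and follows essentially the same route as the paper: substitute the Liouville-type formula $\prescript{RL}{-\infty}I^{\nu}_z(e^{\alpha z})=\alpha^{-\nu}e^{\alpha z}$ into the definition for the integral, sum the resulting geometric series for the ABR and ABC derivatives, and then extend from the region where the series formula and geometric sum both converge to the full stated domain by analytic continuation via Proposition \ref{Prop:EAB:AC}. The only detail you omit is the paper's passing remark that the derivative result could alternatively be read off from the inversion relation of Theorem \ref{Thm:EABcompos}, but the paper itself then carries out the same series computation you describe.
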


\begin{proof}
Again, the result for extended AB integrals follows directly from the definition \eqref{EABint}. For $\alpha\in\mathbb{C}\backslash\{0\}$ and $\nu\in\mathbb{C}$, and with the constant of differintegration $c=-\infty$, the Riemann--Liouville differintegral of an exponential function is given by the following well-known formula:
\begin{equation}
\label{RLexp}
\prescript{RL}{-\infty}I^{\nu}_z\big(e^{\alpha z}\big)=\alpha^{-\nu}e^{\alpha z}.
\end{equation}
Substituting \eqref{RLexp} into \eqref{EABint} gives:
\[\prescript{AB}{-\infty}I^{\nu}_z\big(e^{\alpha z}\big)=\frac{1-\nu}{B(\nu)}\big(e^{\alpha z}\big)+\frac{\nu}{B(\nu)}\prescript{RL}{-\infty}I^{\nu}_z\big(e^{\alpha z}\big)=\frac{1-\nu}{B(\nu)}\cdot e^{\alpha z}+\frac{\nu}{B(\nu)}\cdot\alpha^{-\nu}e^{\alpha z},\]
which yields the desired result for extended AB integrals.

We know from Theorem \ref{Thm:EABcompos} that the extended AB integral and extended ABR derivative are inverse to each other. Thus, since $\frac{1-\nu+\nu\alpha^{-\nu}}{B(\nu)}$ does not depend on $z$, we could deduce the result for extended ABR derivatives directly from that for extended AB integrals. However, we shall also provide a proof using the series formula, which works equally well for the ABR and ABC cases.
\begin{align*}
\prescript{ABR}{-\infty}D^{\nu}_z\big(e^{\alpha z}\big)&=\frac{B(\nu)}{1-\nu}\sum_{n=0}^{\infty}\big(\tfrac{-\nu}{1-\nu}\big)^n\prescript{RL}{-\infty}I^{n\nu}_z\big(e^{\alpha z}\big)=\frac{B(\nu)}{1-\nu}\sum_{n=0}^{\infty}\big(\tfrac{-\nu}{1-\nu}\big)^n\alpha^{-n\nu}e^{\alpha z} \\
&=\frac{B(\nu)}{1-\nu}\cdot e^{\alpha z}\sum_{n=0}^{\infty}\left(\frac{-\nu}{1-\nu}\cdot\alpha^{-\nu}\right)^n \\
&=\frac{B(\nu)e^{\alpha z}}{1-\nu}\cdot\frac{1}{1-\frac{-\nu}{1-\nu}\cdot\alpha^{-\nu}}=\frac{B(\nu)e^{\alpha z}}{1-\nu+\nu\alpha^{-\nu}}; \\
\prescript{ABC}{-\infty}D^{\nu}_z\big(e^{\alpha z}\big)&=\frac{B(\nu)}{1-\nu}\sum_{n=0}^{\infty}\big(\tfrac{-\nu}{1-\nu}\big)^n\prescript{RL}{-\infty}I^{n\nu+1}_z\frac{\mathrm{d}}{\mathrm{d}z}\big(e^{\alpha z}\big)=\frac{B(\nu)}{1-\nu}\sum_{n=0}^{\infty}\big(\tfrac{-\nu}{1-\nu}\big)^n\alpha\prescript{RL}{-\infty}I^{n\nu+1}_z\big(e^{\alpha z}\big) \\
&=\frac{B(\nu)}{1-\nu}\sum_{n=0}^{\infty}\big(\tfrac{-\nu}{1-\nu}\big)^n\alpha\cdot\alpha^{-n\nu-1}e^{\alpha z}=\frac{B(\nu)}{1-\nu}\sum_{n=0}^{\infty}\big(\tfrac{-\nu}{1-\nu}\big)^n\alpha^{-n\nu}e^{\alpha z} \\
&=\prescript{ABR}{-\infty}D^{\nu}_z\big(e^{\alpha z}\big)=\frac{B(\nu)e^{\alpha z}}{1-\nu+\nu\alpha^{-\nu}}.
\end{align*}
The series formulae are valid for $\Real(\nu)>0$, and for the convergence of the infinite geometric series, we also needed the assumption that $\left|\frac{-\nu}{1-\nu}\cdot\alpha^{-\nu}\right|<1$. For a given $\alpha\in\mathbb{C}\backslash\{0\}$, these two inequalities will both be valid for $\nu$ in some small open semicircle near zero. Thus, by analytic continuation, the results are valid for any $\nu\in\mathbb{C}\backslash\mathbb{R}^-$ such that the denominator is nonzero.
\end{proof}

\section{Conclusions} \label{Sec:conclusions}

In this paper, we have proposed an extension of the Atangana--Baleanu model of fractional calculus, defining AB derivatives and integrals to order which may be not only in the real interval $[0,1]$ but anywhere in the complex plane. By analogy with the Cauchy-type complex integral formula for Riemann--Liouville fractional differintegrals, we found a formula for the extended Atangana--Baleanu operators in terms of integration around a closed complex contour, which is often more useful in complex-analytic applications than the original integral along a straight line-segment.

We demonstrated the naturality of our approach by proving rigorously that the extended operators proposed here are in fact analytic continuations of the original AB derivatives and integrals. This enabled us to prove various important properties of the extended operators by direct analytic continuation of the corresponding results for the original real-order AB operators.

As well as considering the standard operators of Atangana--Baleanu fractional calculus (the AB integral, the ABR derivative, and the ABC derivative), we also studied a recent generalisation known as the iterated AB differintegral: a fractional-calculus operator which takes two parameters and includes both the AB integral and the ABR derivative, as well as their iterations, in a single unified operator. This operator too can be extended to complex orders, and we proved its properties and naturality in a similar way as for the AB derivatives and integrals.

Complex analysis is a major part of mathematics in general, but its applications in fractional calculus have largely been overlooked so far. Our formulation of the important Atangana--Baleanu model in terms of complex analysis will surely be useful in future investigations and applications. In particular, for now, the complex-analytic approach enables several operators, originally written as linear combinations of functions and integrals, or as integrals with Dirac-delta kernels, to be expressed more simply and elegantly as convolution integrals with a single kernel function.

\section*{Conflicts of interest}

There are no conflicts of interest to this work.

\end{document}